\definecolor{modra3}{rgb}{.1,.0,.4}
\theoremstyle{plain}
\newtheorem{theorem}{Theorem} 
\newtheorem{lemma}[theorem]{Lemma}
\newtheorem{proposition}[theorem]{Proposition}
\newtheorem{corollary}[theorem]{Corollary}
\theoremstyle{remark}
\newtheorem{remark}[theorem]{Remark}
\newtheorem*{claim_}{Claim}
\begin{document}

\title{Spiraling and Folding: The Topological View\thanks{An earlier version of this paper appeared in the proceedings of
the 19th Canadian Conference on Computational Geometry~\cite{SSS07a}.}}

\author{Jan Kyn\v{c}l\thanks{Charles University, Prague, Czech Republic; \texttt{kyncl@kam.mff.cuni.cz}. Supported by project 22-19073S of the Czech Science Foundation (GA\v{C}R) and by Charles University project UNCE/SCI/004. } 
\and 
Marcus Schaefer\thanks{DePaul University, Chicago, IL 60604, USA; \texttt{mschaefer@cdm.depaul.edu} }
\and 
Eric Sedgwick\thanks{DePaul University, Chicago, IL 60604, USA; \texttt{esedgwick@cdm.depaul.edu} } 
\and 
Daniel \v{S}tefankovi\v{c}\thanks{University of Rochester, Rochester, NY 14627, USA; \texttt{stefanko@cs.rochester.edu}} 
} 

\maketitle


\begin{abstract}
For every $n$, we construct two curves in the plane that intersect  
at least $n$ times and do not form spirals. The construction is
in three stages: we first exhibit closed curves on the
torus that do not form double spirals, then arcs on the torus that
do not form spirals, and finally pairs of planar arcs that do not
form spirals.  These curves provide a counterexample to a proof of 
Pach and T\'{o}th concerning string graphs.
\end{abstract}

\section{Introduction}

In a surface, draw a pair of simple curves with a large number of
intersections. Must the drawing contain any particular
substructures?  And if so, can these structures be used to
simplify the drawing?   Affirmative answers to these questions can
be used to bound the complexity of certain types of drawings, such as those
yielding string graphs.   A \emph{string graph} is the intersection
graph of curves in the plane or some other surface.  The recognition problem for 
string graphs is an old problem~\cite{B59,G76,S66}, \cite[Research problem 1]{AHM78}
that was solved in the planar case~\cite{SS04} by establishing an exponential bound on the
complexity of such drawings.\footnote{The problem is decidable on arbitrary surfaces, as shown in~\cite{SSS03}.} Pach and T\'{o}th~\cite{PT01} also claimed
a solution to the string graph problem, but as we will see in Section~\ref{sec:counter}
their proof contains a serious gap. What is so tricky about this problem?

Starting with a drawing of a pair of simple curves, one can increase the number of
intersections by taking a part of one curve and dragging it over
another curve. This creates a \emph{bigon}, a disk region bounded by exactly one subarc of each curve, whose interior is disjoint from the curves; see Figure~\ref{fBigon}. Of course, there is an obvious simplification of
such a drawing. We call a drawing \emph{reduced} if it contains no bigon. It is not hard to create a reduced drawing with
many intersections, so the question arises: do
such drawings contain some other type of structure?

\begin{figure}
\begin{center}
\includegraphics{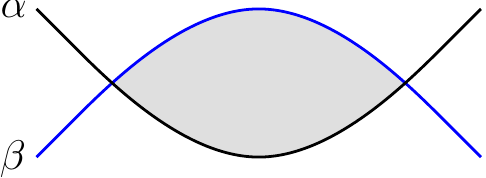}
\caption{A bigon formed by $\alpha$ and $\beta$.}
\label{fBigon}
\end{center}
\end{figure}

One candidate for such a structure is a \emph{fold}, pictured on the right in Figure~\ref{fFold}. 
The drawing of the fold can be made into a reduced drawing by adding an endpoint to the center bigon or by altering the topology of the underlying surface in the bigon. 
Kratochv\'{\i}l and Matou\v{s}ek~\cite{KM91} use folds to create string graphs requiring $\Omega(2^n)$ intersections in any realization.

\begin{figure}
\begin{center}
\includegraphics{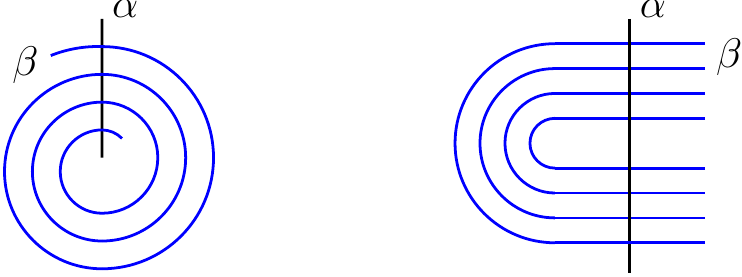}
\caption{Left: A spiral formed by $\alpha$ and $\beta$. Right: A fold formed by $\alpha$ and $\beta$. The vertical arc is a part of the curve $\alpha$, 
all the other arcs are parts of the curve $\beta$.} 
\label{fFold}

\end{center}
\end{figure}

The focus in the current paper is a third structure, the \emph{spiral}, pictured on the left in Figure~\ref{fFold}. Spirals can be used to simplify certain
drawings.  For example, Pach and T\'{o}th~\cite{PT01} demonstrate that spirals can be used to reduce the complexity of string graphs in the plane. However, in this and an earlier
companion paper~\cite{SSS11}, we exhibit drawings with an arbitrary
number of intersections and no spirals. The companion paper shows
that certain drawings in the torus are spiral-free by analyzing
intersection words. Here we use topological techniques to construct
spiral-free drawings both in the torus and in the plane. All the
drawings are reduced and the torus drawings are also fold-free.
Thus, eliminating spirals and folds is not sufficient to bound the complexity of drawings in the torus (or surfaces of higher genus).
And, as we show in Section~\ref{sec:counter}, spiral-free planar drawings are a counterexample to Pach and T\'{o}th's claim that wide folds imply the existence of a spiral (or a bigon)~\cite{PT01}.

In the next section, we define the basic concepts including spirals and train tracks.  In Section~\ref{sec:noSpiralsTorus}, we use train tracks to describe spiral-free drawings in the torus.  Then, in Section~\ref{sec:noSpiralsPlane}, we show how to use the spiral-free torus drawings to construct spiral-free drawings in the plane.

\section{Curves in Surfaces}

In this section we introduce basic terminology for curves in surfaces, show how they can be represented using train tracks and develop basic facts on spirals.

\subsection{Curves}

We refer the reader to~\cite{Code17_computational,farb2012primer,Stillwell93} for a basic reference on the topology of surfaces.  
Here we consider \emph{surfaces} to be compact and orientable.  We allow surfaces to have boundary components (which can be obtained by puncturing a closed surface). 
A \emph{disk} is a surface that is homeomorphic to the closed unit disk, that is, the closure of the region bounded by the unit circle in the plane.   An \emph{annulus} is a surface that is homeomorphic to the closure of the region bounded by two concentric circles in the plane. 

A \emph{simple arc} in a surface $F$ is an image of an injective continuous map from the closed unit interval $[0,1]$ to $F$. 
A \emph{simple closed curve} or a \emph{simple loop} in a surface $F$ is an image of an injective continuous map from the unit circle to $F$.
A \emph{simple curve} is a simple arc or a simple closed curve. If $\alpha$ is a simple arc, we denote by $\partial\alpha$ the two-element set of its endpoints. If $\alpha$ is a simple closed curve, then $\partial\alpha=\emptyset$. 
In other words, a simple curve in $F$ is a connected 1-manifold embedded in $F$ (with no self-intersections).  We will refer to simple curves shortly as \emph{curves}, and we use the term  \emph{multicurve} to refer to the union of finitely many pairwise disjoint simple curves.    A simple closed curve $\alpha$ is \emph{essential} in a compact surface $F$ if $\alpha$ does not bound a disk in $F$, and $\alpha$ is not homotopic\footnote{Two curves are {\em homotopic} if they can be continuously deformed into each other.} to any boundary component of $F$.

A curve (or multicurve) $\alpha$ in a surface $F$ with boundary is \emph{properly embedded} in $F$ if it is simple and it meets the boundary of $F$ exactly in its endpoints (if any); that is, $\alpha \cap \partial F = \partial \alpha$.

An \emph{isotopy} of a curve is a continuous deformation (homotopy) of that curve that, at each point in time, is a proper embedding of the curve.
For properly embedded simple arcs we also insist that the isotopy fixes the endpoints.  Two curves are \emph{isotopic} if there is an isotopy between them.

In drawings of more than one curve, we assume that the curves are in general position in the sense that they do not touch or overlap; if two curves intersect, they have to do so transversely.  Let  $|\alpha \cap \beta|$  denote the number of intersection points between curves $\alpha$ and $\beta$.

\subsection{Bigons}
Let $D$ be a disk and $\alpha$ and $\beta$ be arcs that are properly embedded in $D$. The arcs $\alpha$ and $\beta$ \emph{form a bigon $B$ in $D$} if they intersect precisely twice in $D$.  In this case, the \emph{bigon} $B$ will be the closed disk region bounded by subarcs of $\alpha$ and $\beta$.
We say that a pair of curves  $\alpha$ and $\beta$ \emph{form a bigon} in a surface $F$ if there is a disk $D \subset F$ such that subarcs of $\alpha$ and $\beta$ form a bigon in $D$.

If $\alpha$ and $\beta$ do not bound a bigon, we will say that their intersection is
\emph{reduced}.  We say that a drawing consisting of a set of curves is \emph{reduced} if all pairs of curves in the drawing have reduced intersection.

\subsection{Folds}
 Let $A$ be an annulus and $\alpha,\beta$ be two curves. Let $\alpha_0$, $\beta_1, \beta_2, \dots, \beta_k$ be arcs properly embedded in $A$ with distinct endpoints on the outer boundary curve of $A$, such that $\alpha \cap A = \alpha_0$ and $\beta \cap A = \bigcup_{i=1}^k \beta_i$. The arcs $\alpha$ and $\beta$ \emph{form a fold of width $k$ in $A$} if they have reduced
intersection (no bigons) in $A$ and every $\beta_i$ intersects $\alpha_0$ in exactly two points
in $A$.  See Figure~\ref{fig_fold}.
If a boundary curve of the annulus defining a fold bounds a disk in the total surface $F$, then this forces a  bigon, so the drawing is not reduced.

\begin{figure}
\begin{center}
\includegraphics{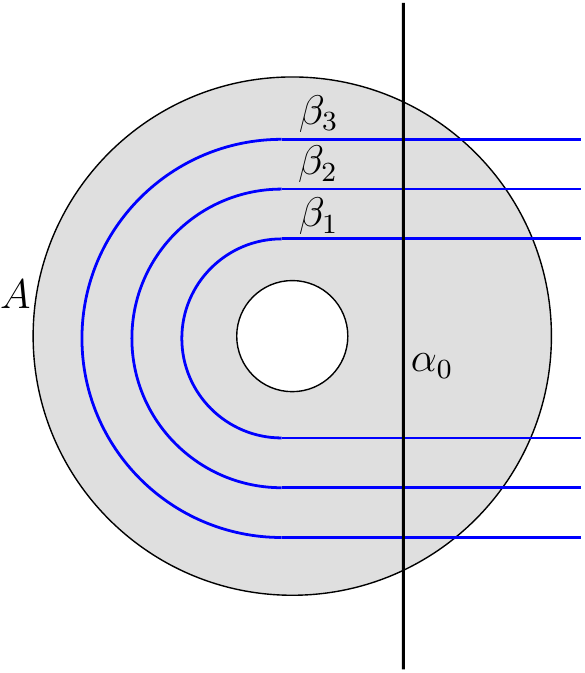}
\caption{A fold of width $3$ formed by $\alpha$ and $\beta$ in $A$.} 
\label{fig_fold}
\end{center}
\end{figure}

\subsection{Spirals}
Let $A$ be an annulus and $\beta$ and $\gamma$ be \emph{spanning
arcs} for $A$, that is, arcs properly embedded in $A$ and with
endpoints in opposite boundary curves of $A$.  The arcs $\beta$ and
$\gamma$ \emph{form a $d$-spiral in $A$} if they have distinct endpoints, have reduced
intersection (no bigons) in $A$, and intersect in $d+2$ points
in $A$.  See Figure~\ref{fSpiral}. The dashed lines indicate the possibility that $\beta$ and $\gamma$ are subarcs of larger curves, and  other subarcs of the larger curves may also be present in the annulus $A$. If $d \ge 1$, we say that {\it
$\beta$ and $\gamma$ form a spiral in $A$}.

A pair of curves $\beta$ and $\gamma$ in a surface $F$
form a \emph{$d$-spiral in $F$ with respect to an annulus  $A \subset F$} if subarcs of $\beta$ and $\gamma$ form a
$d$-spiral in $A$. Again, if $d \ge 1$, we say that \emph{$\beta$
and $\gamma$ form a spiral in $F$ (with respect to $A$)}.

\begin{figure}
\begin{center}
\includegraphics{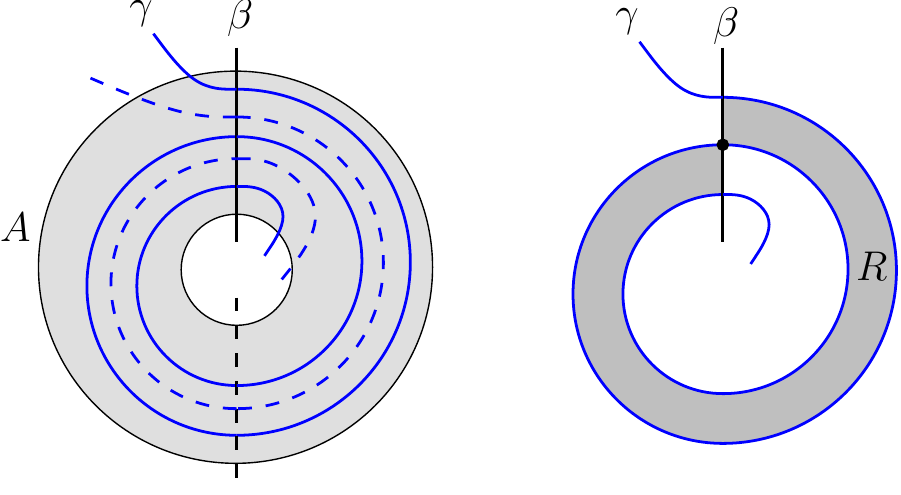}
\caption{$\beta$ and $\gamma$ form a $1$-spiral. The dashed lines indicate that other subarcs may be present.} \label{fSpiral}
\end{center}
\end{figure}

Note that $\beta$ and $\gamma$ forming a spiral in $F$ is equivalent
to the existence of a ``singular'' rectangle $R \subset F$, a region formed by one subarc of
$\beta$ and one subarc of $\gamma$, whose interior is homeomorphic to an open disc, but its boundary path meets itself in exactly one point; see
Figure~\ref{fSpiral}.

\begin{lemma} 
\label{lemLEQ2} 
Suppose that $\beta$, $\beta'$, $\gamma$ and
$\gamma'$ are spanning arcs for an annulus $A$ with pairwise reduced intersection. 
If $\beta \cap \beta' = \gamma \cap \gamma' = \emptyset$, then $\big| | \beta \cap  \gamma | - | \beta \cap \gamma' | \big|
\le 1$ and $\big| |\beta \cap  \gamma | - | \beta' \cap \gamma' | \big| \le 2$.
 \end{lemma}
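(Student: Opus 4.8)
The plan is to establish the first inequality $\big||\beta\cap\gamma|-|\beta\cap\gamma'|\big|\le 1$ by a direct surgery-and-counting argument in $A$, and then to deduce the second from it by the triangle inequality:
\[
\big||\beta\cap\gamma|-|\beta'\cap\gamma'|\big|\;\le\;\big||\beta\cap\gamma|-|\beta\cap\gamma'|\big|+\big||\beta\cap\gamma'|-|\beta'\cap\gamma'|\big|,
\]
where the first summand is at most $1$ by the first inequality applied to the triple $\beta,\gamma,\gamma'$ (which uses only $\gamma\cap\gamma'=\emptyset$), and the second is at most $1$ by the first inequality applied with $\gamma'$ in the role of $\beta$ and with $\beta,\beta'$ in the roles of $\gamma,\gamma'$ (which uses only $\beta\cap\beta'=\emptyset$). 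So the whole content is the first inequality, for which I will use only that $\beta$ has reduced intersection with each of $\gamma$ and $\gamma'$ and that $\gamma\cap\gamma'=\emptyset$. After a small perturbation of the endpoints, which alters no intersection number and preserves reducedness, I may assume the endpoints of $\beta,\gamma,\gamma'$ on each boundary circle of $A$ are pairwise distinct.

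Let $C_0,C_1$ be the boundary circles of $A$. Since $\gamma$ and $\gamma'$ are disjoint spanning arcs, $\gamma\cup\gamma'$ cuts $A$ into two disks $Q$ and $Q'$; each is a quadrilateral whose four sides, in cyclic order, are $\gamma$, an arc of $C_1$, $\gamma'$, and an arc of $C_0$. Write $\sigma_0,\sigma_1$ for the sides of $Q$ on $C_0,C_1$ and $\sigma_0',\sigma_1'$ for those of $Q'$, so $\sigma_0\cup\sigma_0'=C_0$ and $\sigma_1\cup\sigma_1'=C_1$ (meeting only at the endpoints of $\gamma,\gamma'$). For $i=0,1$ let $\epsilon_i\in\{0,1\}$ be $1$ exactly when the endpoint of $\beta$ on $C_i$ lies on $\sigma_i$; by general position $\epsilon_0+\epsilon_0'=\epsilon_1+\epsilon_1'=1$. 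The key geometric fact is that no subarc of $\beta$ inside $Q$ (or inside $Q'$) has both endpoints on the side $\gamma$, and none has both on the side $\gamma'$: choosing such a subarc innermost, it cuts off from $Q$ a region bounded by a subarc of $\beta$ and a subarc of $\gamma$ (resp.\ of $\gamma'$) whose interior meets neither curve; and, since its two corners lie in the interior of the side $\gamma$, the region stays away from the other three sides $\gamma'$, $\sigma_0$, $\sigma_1$, hence it is a genuine bigon, contradicting reducedness.

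Now count the endpoints of the subarcs of $\beta\cap Q$ on the sides of $Q$. By the previous paragraph, every subarc with both endpoints on $\gamma\cup\gamma'$ is of type $\gamma$–$\gamma'$; say there are $m$ of these, contributing $m$ endpoints to $\gamma$ and $m$ to $\gamma'$. Every other subarc has at least one endpoint on $\sigma_0\cup\sigma_1$, hence at most one endpoint on $\gamma\cup\gamma'$; letting $a$ and $a'$ denote the numbers of such endpoints lying on $\gamma$ and on $\gamma'$, there are at most $\epsilon_0+\epsilon_1$ such subarcs in total and each contributes at most one to $a+a'$, so $a+a'\le\epsilon_0+\epsilon_1$. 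Since $\beta\cap\gamma$ is in bijection with the endpoints of $\beta$-subarcs on the side $\gamma$, we get $|\beta\cap\gamma|=m+a$ and likewise $|\beta\cap\gamma'|=m+a'$, so
\[
\big||\beta\cap\gamma|-|\beta\cap\gamma'|\big|=|a-a'|\le a+a'\le\epsilon_0+\epsilon_1.
\]
The identical argument carried out in $Q'$ bounds the same quantity by $\epsilon_0'+\epsilon_1'$; since $(\epsilon_0+\epsilon_1)+(\epsilon_0'+\epsilon_1')=2$, one of the two quadrilaterals yields a bound of at most $1$, proving the first inequality.

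I expect the delicate point to be the ``innermost bigon'' step: one must verify that the cut-off region is genuinely a bigon, i.e.\ that its interior is disjoint not only from $\beta$ but also from $\gamma$ and $\gamma'$; this is exactly where the general-position assumption on the endpoints and the observation that the region avoids the corners of $Q$ are used. Everything else is bookkeeping. (An alternative, less self-contained route is to lift $\beta,\gamma,\gamma'$ to the universal cover $\mathbb{R}\times[0,1]$ of $A$ and express each intersection number as the number of integer translates of a lift of the one arc that a lift of the other crosses; disjointness of $\gamma$ and $\gamma'$ then forces their ``winding numbers'' to lie within $1$ of each other. But extracting the sharp constant $1$ rather than $2$ from that computation requires the same attention to endpoint positions, so it offers no real savings.)
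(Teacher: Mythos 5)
Your proposal is correct and follows essentially the same route as the paper's proof: cut $A$ along $\gamma\cup\gamma'$ into two quadrilaterals, use reducedness to rule out subarcs of $\beta$ returning to the same side, and count crossings, with the second inequality obtained from the first by the triangle inequality. Your version merely makes the endpoint bookkeeping (the $\epsilon_i$ indicators and the symmetrization over the two quadrilaterals) more explicit than the paper's case split on where the endpoints of $\beta$ land.
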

 
The lemma uses $|\cdot|$ in two different meanings: applied to sets it is the cardinality of the set, applied to numbers it is the absolute value.

\begin{proof} Since $\gamma$ and $\gamma'$ are disjoint they
cut the annulus $A$ into two
rectangles, $R_1$ and $R_2$ as illustrated in Figure~\ref{fR1R2}. The arc $\beta$
has one endpoint on the top of a rectangle, say $R_1$, and another
endpoint on the bottom of a rectangle, either $R_1$ or $R_2$.  Since
$\beta$ has reduced intersection with both $\gamma$ and $\gamma'$,
there are no bigons in either rectangle.  Aside from the two arcs
meeting the top and bottom of the rectangles, every other subarc of
$\beta$ in a rectangle is horizontal, joining opposite sides.   If
the top and bottom endpoints of $\beta$ are both in $R_1$, then
all subarcs of $\beta$ in $R_2$ are horizontal, implying that $| \beta \cap \gamma | = | \beta\cap \gamma' |$. Otherwise, the top endpoint of $\beta$ is
in $R_1$ and the bottom endpoint in $R_2$. In this case, we have $\big| | \beta,
\gamma | - | \beta\cap \gamma' | \big|=1$.

\begin{figure}
\begin{center}
\includegraphics[height=1.5in]{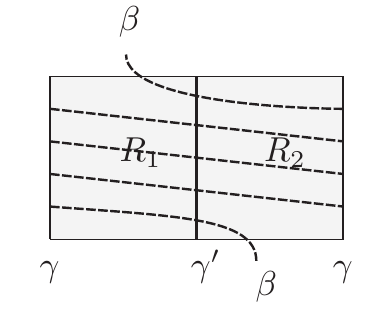}
\caption{$\beta$ passing through $R_1$ and $R_2$} 
\label{fR1R2}
\end{center}
\end{figure}

The second inequality follows by two applications of the first:
$\big| | \beta\cap \gamma | - | \beta'\cap \gamma' | \big| = \big| | \beta\cap \gamma
| - | \beta\cap \gamma' | + | \beta\cap \gamma' | - | \beta'\cap \gamma' |
\big| \le \big| | \beta\cap \gamma | - | \beta\cap \gamma' | \big| + \big| | \beta \cap
\gamma' | - | \beta'\cap \gamma' | \big| \le 2$.
\end{proof}

\begin{lemma} \label{lemAnnulusBound}
 Suppose that the curves $\beta$ and $\gamma$ have reduced intersection and form a
$d$-spiral in a surface $F$ with respect to an annulus $A$. Let $\alpha$ be one of the two boundary curves
of $A$. Let $k = |\beta \cap \alpha|$ and $l = |\gamma \cap \alpha|$. Then
\[
|\beta \cap \gamma| \ge dkl+k+l.
\]
\end{lemma}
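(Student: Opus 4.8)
The plan is to push the whole computation into the annulus $A$, record the intersections there in a matrix, bound each entry using Lemma~\ref{lemLEQ2}, and sum.

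First I would unpack the spiral hypothesis: it gives spanning arcs $\beta_0\subseteq\beta$ and $\gamma_0\subseteq\gamma$ of $A$ with distinct endpoints, reduced intersection in $A$, and $|\beta_0\cap\gamma_0|=d+2$, where $\beta_0$ (resp.\ $\gamma_0$) meets $\alpha$ in exactly one of its two endpoints. Next I would fix the picture of $\beta\cap A$ and $\gamma\cap A$: I claim that $\beta\cap A$ is a disjoint union of $k$ spanning arcs $\beta_1=\beta_0,\beta_2,\dots,\beta_k$ of $A$, one issuing from each of the $k$ points of $\beta\cap\alpha$, and similarly $\gamma\cap A=\gamma_1=\gamma_0,\gamma_2,\dots,\gamma_l$. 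Since $\beta$ and $\gamma$ are simple, the $\beta_i$ are pairwise disjoint, as are the $\gamma_j$; and since $\beta$ and $\gamma$ have reduced intersection in $F$, every pair $\beta_i,\gamma_j$ has reduced intersection inside $A$. The real content is that no component of $\beta\cap A$ or $\gamma\cap A$ turns back, i.e.\ has both endpoints on the same boundary curve of $A$: such a component is boundary-parallel, and an innermost one would cut off a bigon. Making this precise, using that the relevant curves (including $\alpha$) are in reduced position, is the step I expect to be the main obstacle; the rest is bookkeeping.

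Granting the picture, set $m_{ij}=|\beta_i\cap\gamma_j|$, so that $|\beta\cap\gamma|\ge\sum_{i=1}^k\sum_{j=1}^l m_{ij}$ and $m_{11}=d+2$. For $i\ne i'$ the arcs $\beta_i$ and $\beta_{i'}$ are disjoint spanning arcs of $A$, so the first inequality of Lemma~\ref{lemLEQ2} --- with $\gamma_j$ playing the role of $\beta$ and $\beta_i,\beta_{i'}$ the role of the disjoint pair $\gamma,\gamma'$ --- gives $\bigl|m_{ij}-m_{i'j}\bigr|\le1$; symmetrically $\bigl|m_{ij}-m_{ij'}\bigr|\le1$ for $j\ne j'$. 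Hence $m_{i1}\ge m_{11}-1=d+1$ for every $i$, $m_{1j}\ge d+1$ for every $j$, and $m_{ij}\ge m_{i1}-1\ge d$ for all $i,j$. Summing the entries of the matrix,
\[
|\beta\cap\gamma|\ \ge\ m_{11}+\sum_{i\ge2}m_{i1}+\sum_{j\ge2}m_{1j}+\sum_{i,j\ge2}m_{ij}\ \ge\ (d+2)+(k-1)(d+1)+(l-1)(d+1)+(k-1)(l-1)d,
\]
and expanding the last expression gives exactly $dkl+k+l$, which is the bound we want.
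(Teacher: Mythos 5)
Your approach is the same as the paper's: decompose $\beta\cap A$ and $\gamma\cap A$ into spanning arcs, bound each pairwise intersection count by applying the first inequality of Lemma~\ref{lemLEQ2}, and sum; your matrix bookkeeping reproduces the paper's sum $(d+2)+(d+1)(k-1+l-1)+d(k-1)(l-1)=dkl+k+l$ exactly.

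The step you single out as the main obstacle is worth a closer look, because the paper does not address it either: it simply asserts that ``$\beta$ meets the annulus $A$ in at least $k$ spanning arcs,'' which (since a spanning arc meets $\alpha$ in exactly one point and there are only $k$ such points) is equivalent to the claim that no arc of $\beta\cap A$ has both endpoints on $\alpha$. So your instinct that something must be said there is sound, and in that sense your write-up is more honest than the paper's. However, the innermost-arc sketch you give does not close as written: an innermost boundary-parallel arc $\delta$ of $\beta$ based on $\alpha$ cuts off a disk bounded by $\delta$ and an arc of $\alpha$, but $\alpha$ is not one of the two curves of the drawing, so this disk is not a bigon between $\beta$ and $\gamma$; moreover the disk may contain further arcs of $\gamma$ (running from $\delta$ to $\alpha$, say) without creating any $\beta$--$\gamma$ bigon, so reducedness of $\beta$ and $\gamma$ alone does not rule it out. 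Some additional assumption --- e.g.\ that $\partial A$ has no bigons with $\beta$ or with $\gamma$, which one can always arrange by isotoping $A$ --- seems to be implicitly in force; it is not stated in the lemma, and neither your proof nor the paper's supplies the missing reduction. Modulo that shared gap, your argument matches the paper's.
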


\begin{proof}
The curve $\beta$ meets the annulus $A$ in at least $k$ spanning arcs, $\beta_1, \ldots, \beta_k$.
Similarly, $\gamma$ meets $A$ in at least $l$ spanning arcs, $\gamma_1, \ldots, \gamma_l$.

By the definition of spiral, some pair  ($\beta_1,
\gamma_1)$, has $d+2$ intersections in $A$. By Lemma~\ref{lemLEQ2}, pairs of the form ($\beta_i, \gamma_1)$ where $i\neq1$ and
$(\beta_1, \gamma_j)$ where $j\neq 1$ each have at least $d+1$
intersections in $A$. Also, by Lemma~\ref{lemLEQ2}, pairs of the
form $(\beta_i,\gamma_j)$ with $i \neq 1, j \neq 1$ have at least $d$
intersections in $A$. The total number of intersections between
$\beta$ and $\gamma$ in $A$ is therefore at least
\[
(d+2) + (d+1)(k-1 + l-1) + d(k-1) (l-1) = dkl + k+l.
\]
This serves as lower bound on $|\beta \cap \gamma|$, the total number of
intersections of $\beta$ and $\gamma$.
\end{proof}

Lemma \ref{lemAnnulusBound} establishes a lower bound on $|\beta \cap \gamma|$ but there could be many more intersections for two reasons:  First, $\beta$ and $\gamma$ may meet elsewhere in the surface $F$, the lemma only counts intersections in $A$. Second, the lemma allows for \emph{slippage}:  While the  subarcs of $\beta$ and $\gamma$ that form the $d$-spiral intersect $d+2$ times, others pairs of subarcs from $\beta$ and $\gamma$ may slip and can have as few as $d$ intersections in $A$ (see Lemma \ref{lemLEQ2}). For example, if you take 2 close parallel copies of each of the arcs that form a 1-spiral, there will be $12=4 \cdot 3$ intersections in $A$, while the lemma only guarantees $8 = 1 \cdot 2 \cdot 2 + 2 + 2$. The lower bound of 8 can be realized by letting the second arcs slip, see the left side of Figure \ref{fSpiral}.

\begin{lemma}
\label{lemConcentricSpirals}
Let $\beta$ and $\gamma$ be properly embedded multicurves that have reduced intersection in $F$. Let  $\beta_1, \beta_2$ be subarcs of $\beta$ and $\gamma_1, \gamma_2$ subarcs of $\gamma$ so that for some $d_1,d_2 \ge 1$,
\begin{itemize}
\item $\beta_1$ and $\gamma_1$ form a $d_1$-spiral in $F$ with respect to an annulus $A_1$, and
\item $\beta_2$ and $\gamma_2$ form a $d_2$-spiral in $F$ with respect to an annulus $A_2$.
\end{itemize} 
If the defining annuli $A_1$ and $A_2$ are
disjoint in $F$ but have boundary curves that are isotopic in $F$ then $\beta$ and $\gamma$ form a $(d_1+d_2)$-spiral in $F$.
\end{lemma}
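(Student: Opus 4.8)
The key observation is that two disjoint annuli with isotopic boundary curves sit inside a larger annulus: the region ``between'' them. More precisely, I would first argue that since $A_1$ and $A_2$ are disjoint and their boundary curves are all isotopic to a common essential simple closed curve $c$ in $F$, one boundary curve of $A_1$ and one boundary curve of $A_2$ cobound an annulus $A_0$ in $F$ whose interior is disjoint from both $A_1$ and $A_2$; gluing $A_1$, $A_0$, $A_2$ along these curves produces a single embedded annulus $A \subset F$ containing $A_1 \cup A_2$, with one boundary curve coming from $A_1$ and the other from $A_2$. (One has to check the orientations match up so that this union really is an annulus and not, say, a Klein bottle piece or a torus; orientability of $F$ and the isotopy hypothesis handle this.)

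Next I would produce the spanning arcs. Inside $A_1$, the subarcs $\beta_1$ and $\gamma_1$ restrict to spanning arcs realizing a $d_1$-spiral, so in particular $\beta_1$ meets the $A_1$-side boundary curve of $A$; similarly $\beta_2$ meets the $A_2$-side boundary curve of $A$. The plan is to choose a subarc $\beta'$ of $\beta$ that starts on the outer boundary of $A$ (on the $A_1$ side), runs through $A_1$ as $\beta_1$ does, crosses $A_0$, and continues through $A_2$ as $\beta_2$ does, ending on the other boundary of $A$; and similarly a subarc $\gamma'$ of $\gamma$. For this I need that the relevant strand of $\beta$ passing through $A_1$ as part of the spiral actually continues on to pass through $A_2$ as part of the other spiral — but since $\beta$ is a single multicurve component, after leaving $A_1$ it must re-enter the picture somewhere, and an isotopy of the annulus $A$ (not moving $\beta,\gamma$) can be used to arrange that the arcs of $\beta$ and $\gamma$ in $A_0$ are all spanning, i.e.\ $A_0$ contributes no extra intersections and introduces no bigons. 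The upshot: $\beta' \cap \gamma'$ within $A$ equals $|\beta_1 \cap \gamma_1|_{A_1} + |\beta_2 \cap \gamma_2|_{A_2} = (d_1 + 2) + (d_2 + 2)$, but since the two pieces are joined this overcounts the ``$+2$'' endpoints by $2$, giving $(d_1 + d_2) + 2$ intersections of the single pair of spanning arcs $\beta',\gamma'$ in $A$.

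Finally, to conclude that $\beta$ and $\gamma$ form a $(d_1+d_2)$-spiral with respect to $A$, I must verify the definition: $\beta'$ and $\gamma'$ are spanning arcs for $A$ with distinct endpoints (inherited from the distinct-endpoints condition in each $A_i$, since the endpoints of $\beta'$ and $\gamma'$ on $\partial A$ are the endpoints of $\beta_1,\gamma_1$ and $\beta_2,\gamma_2$ on the outer boundaries of $A_1,A_2$), they have reduced intersection in $A$ (this follows because $\beta,\gamma$ have reduced intersection in all of $F$, and a bigon in $A$ would be a bigon in $F$), and they intersect in exactly $(d_1+d_2)+2$ points. Then $d_1 + d_2 \ge 1$ since both $d_i \ge 1$, so this is genuinely a spiral.

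\textbf{Main obstacle.} The delicate point is the ``gluing'' step and the claim that the intermediate annulus $A_0$ can be taken to contribute nothing: a priori $\beta$ and $\gamma$ could wind around in the region between $A_1$ and $A_2$, creating extra intersections or even bigons that spoil the count, and the subarc of $\beta$ leaving $A_1$ need not be the same strand that enters $A_2$. Handling this requires an isotopy/innermost-disk argument showing that, after isotoping $A$ rel the curves (or after passing to the correct choice of cobounding annulus $A_0$ among the isotopic options), the strands of $\beta$ and $\gamma$ in $A_0$ can be assumed to consist only of the two spanning arcs connecting the spiral in $A_1$ to the spiral in $A_2$, with all other strands either horizontal (and hence removable by a better choice of $A$) or absent — this is where the ``reduced in $F$'' hypothesis and a careful picture, analogous to the rectangle analysis in Lemma~\ref{lemLEQ2}, do the real work.
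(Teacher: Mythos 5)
Your setup (gluing $A_1$, the intermediate annulus, and $A_2$ into a single annulus $A$, then extending the spiral arcs through $A$ and counting intersections) is the same as the paper's, and you correctly identified where the difficulty lies. But the resolution you propose for that difficulty does not work, and it hides the one tool that actually closes the gap. You want to arrange, by an isotopy of $A$ that fixes $\beta$ and $\gamma$, that the strand of $\beta$ realizing the spiral in $A_1$ continues across $A_0$ into the strand $\beta_2$ realizing the spiral in $A_2$ (and likewise for $\gamma$). No isotopy of the annulus can achieve this: which strand of $\beta$ the arc $\beta_1$ connects to after leaving $A_1$ is determined by $\beta$ itself, and in general the extension of $\beta_1$ (or $\gamma_1$) lands on a strand \emph{parallel to but distinct from} $\beta_2$ (or $\gamma_2$) — the paper's Figure~\ref{fConcentricSpirals} depicts exactly this for $\gamma$. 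The correct fix is Lemma~\ref{lemLEQ2}: the extended arcs $\beta_1'$, $\gamma_1'$ are spanning arcs of $A$ that cross $A_2$, and even if they are disjoint from $\beta_2$ and $\gamma_2$ there, the slippage bound guarantees they meet at least $(d_2+2)-2 = d_2$ times in $A_2$. Added to the $d_1+2$ intersections in $A_1$, this gives $d_1+d_2+2$, as required. Your count $(d_1+2)+(d_2+2)-2$ is also wrong on its own terms: if the arcs really did subsume $\beta_2,\gamma_2$, the intersection points in $A_1$ and $A_2$ are distinct interior points of disjoint annuli, so nothing is double-counted and you would get $d_1+d_2+4$; the "$-2$" correction you invoke has no source, and the genuine $-2$ in the argument comes from slippage, not from shared endpoints.

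A second gap: you assert that the extended arc $\beta'$ "ends on the other boundary of $A$," i.e., that it spans $A$, but this needs proof. A priori the extension of $\beta_1$ could double back and exit through the boundary component it entered, in which case it cuts off a bigon-like disk $B$ with $\partial A$ and there is no spanning arc at all. The paper rules this out using $d_1\ge 1$: a non-spanning $\beta_1'$ would bound a disk $B$ with $\partial A$, the at least three intersections with $\gamma_1'$ force a subarc of $\gamma_1'$ inside $B$ with both endpoints on $\beta_1'$, and an innermost such subarc yields a bigon, contradicting reduced intersection. Without this step (and without Lemma~\ref{lemLEQ2} for the count), the proof is not complete.
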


\begin{proof}
Since the boundaries of the two annuli are parallel, there is another annulus between $A_1$ and $A_2$.  In this situation, we have concentric spirals such as those pictured
in Figure~\ref{fConcentricSpirals}. 
A priori, the spirals could wind in opposite directions. It will be a consequence of the assumptions and proof that this actually does not occur, see Remark~\ref{rem:counterwind}.  Regardless, call the union of all three annuli $A$.

Let $\beta_1'$ and $\gamma_1'$ be the arcs obtained by extending $\beta_1$ and $\gamma_1$ along $\beta$ and $\gamma$ until they exit $A$. (The arcs cannot close up because $\beta_1$ and $\gamma_1$ have an endpoint on the boundary of $A$, so they must exit as there are no endpoints in the interior of $A$.) Then $\beta_1'$ and $\gamma_1'$ must be spanning arcs for $A$:
Suppose, for contradiction, that one of them, say $\beta_1'$, does not span $A$ and instead has both endpoints on the same boundary component of $A$; see Figure~\ref{fMustSpan}.  Then $\beta_1'$ bounds a disk (a bigon) $B$ in $A$ with that boundary component of $A$.   But, $\beta_1'$ forms a $d_1$-spiral ($d_1 \ge 1$) with $\gamma_1'$ in $A_1 \subset A$, which means that $\gamma_1'$ meets $\beta_1'$ at least 3 times in $A$.  Inside $B$ at most two of these intersections can be connected to the boundary $A$ by a subarc of $\gamma_1'$.  It follows that there must be another subarc of $\gamma_1' \cap B$ that has both endpoints on $\beta_1'$.  This forms a smaller bigon between $\beta_1'$ and $\gamma_1'$ inside $B$  and contradicts the assumption that $\beta$ and $\gamma$ have reduced intersection.   Thus $\beta_1'$ and (by a symmetric argument) $\gamma_1'$ are spanning arcs.

\begin{figure}
\begin{center}
\includegraphics{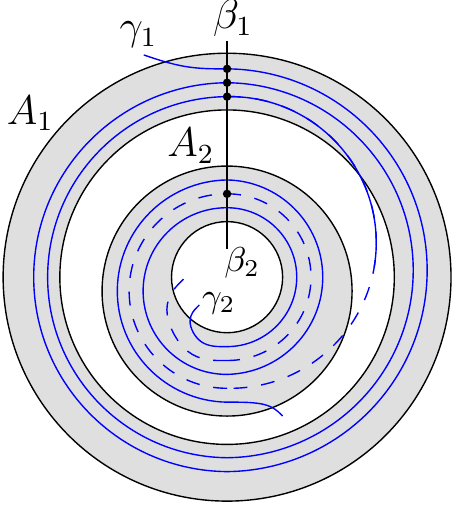}
\caption{Concentric $1$-spirals imply a $2$-spiral. In this figure $\beta_1$ connects to  $\beta_2$ in $A$ but $\gamma_1$ does not connect to $\gamma_2$ in $A$.}
\label{fConcentricSpirals}
\end{center}
\end{figure}

By assumption $\beta_1'$ and $\gamma_1'$ have at least $d_1+2$ intersections
in $A_1$.   Since they span $A$ they also pass through $A_2$, where they either
subsume or remain disjoint from the subarcs $\beta_2$ and $\gamma_2$, respectively. In the worst case neither is subsumed, but Lemma~\ref{lemLEQ2} still
guarantees that $\beta_1'$ and $\gamma_1'$ have at least $d_2+2 - 2 = d_2$
additional intersection in $A_2$. 
Therefore  $\beta_1'$ and $\gamma_1'$
have at least $d_1+d_2+2$ intersections in $A$, so the total curves $\beta$ and $\gamma$ form a $(d_1+d_2)$-spiral in
$A$ (and $F$).
\end{proof}

\begin{remark}\label{rem:counterwind}
The reader may wonder about the case when the concentric spirals wind in opposite directions, one clockwise and one counter-clockwise. In fact, this does not occur because the intersection is assumed to be reduced.  In the above proof, if the spirals wound in opposite directions, then a bigon would be formed whose corners were the last intersection between $\beta_1'$ and $\gamma_1'$ in $A_1$ and the first additional intersection between $\beta_1'$ and $\gamma_1'$  in $A_2$ (they have opposite algebraic sign).
\end{remark}

\begin{figure}
\begin{center}
\begin{overpic}[width=2in]
{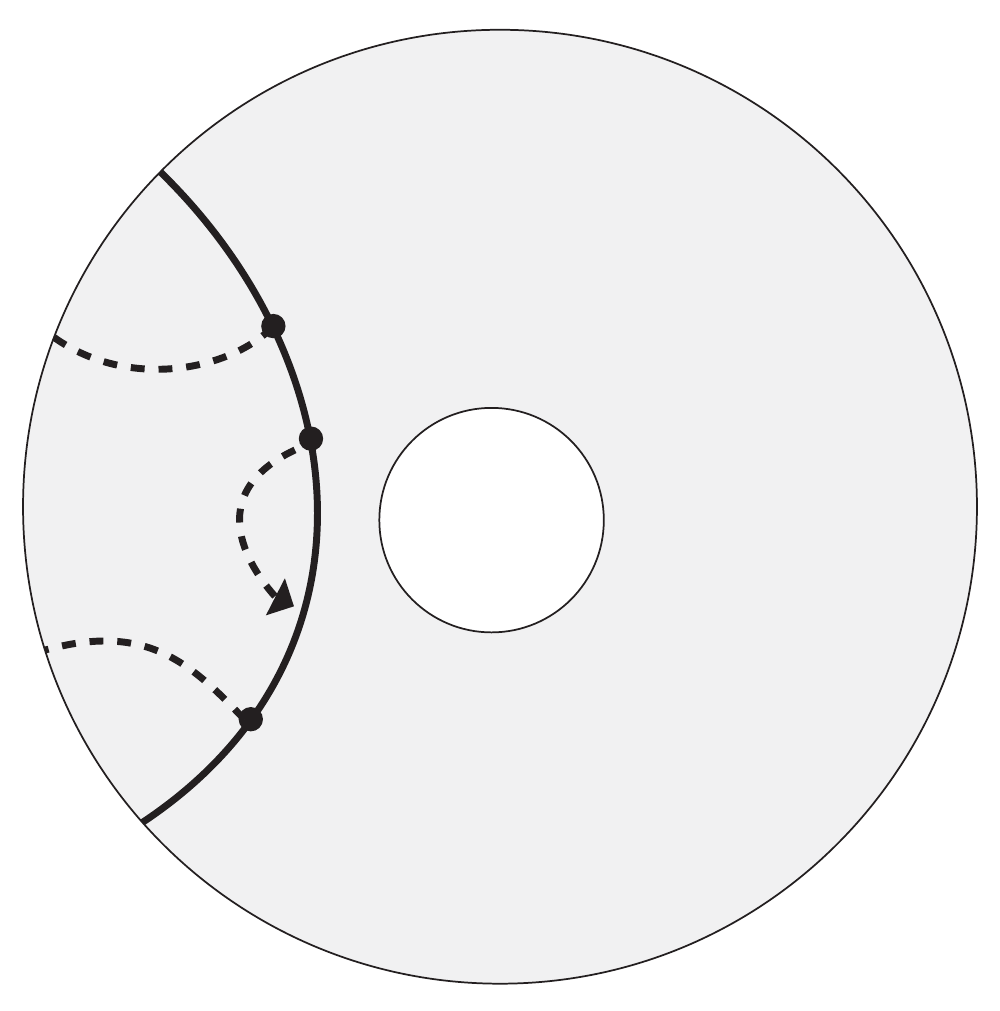}
\put(5,85){$\beta_1'$}
\put(45,80){$A$}
\put(10,50){$B$}
\put(-5,68){$\gamma_1'$}
\end{overpic}
\caption{If $\beta_1'$ does not span $A$, then the intersection between $\beta_1'$ and $\gamma_1'$ is not reduced.}
\label{fMustSpan}
\end{center}
\end{figure}

\subsection{Train Tracks}

Train tracks give us a shorthand for drawing complicated
multicurves on a surface.  The basic idea is captured in Figure~\ref{fTrainTrack}.  In a complicated multicurve it can be hard to differentiate individual subarcs that take a similar path. A weighted train track embraces their similarity by combining these subarcs and noting their count.

\begin{figure}
\begin{center}
\includegraphics[width=6in]{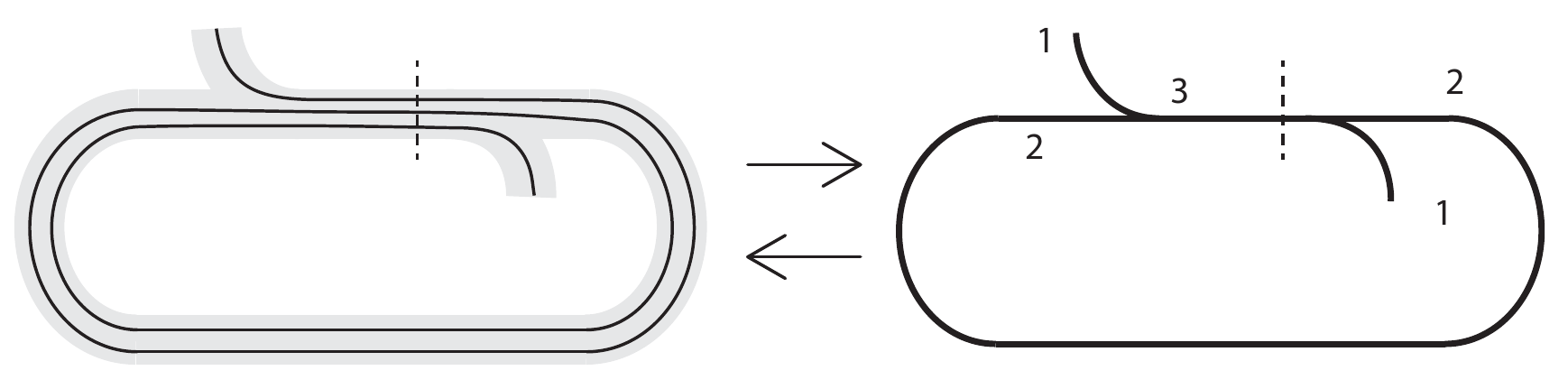}
\caption{A multicurve and the corresponding weighted train track.  The dashed arc is a basis arc for the branch with weight 3. That basis arc and the multicurve form a 1-spiral in a neighborhood of the track.}
\label{fTrainTrack}
\end{center}
\end{figure}

We now give a more formal definition, for a reference, see~\cite{PennerHarer}.  A \emph{train track} is a graph $T$ that is smoothly embedded in
a surface $F$. All vertices must have degree 1 or 3.  The vertices of degree one are called \emph{terminals},
the vertices of degree three are called \emph{switches}, and the
edges are called \emph{branches}. The embedding into $F$ must be smooth in the sense that: a) the interior of each branch/edge is a smooth curve, and b) at each switch, two of the branch ends meeting the switch are smooth extensions of the third. A \emph{basis curve} for a branch is a  simple curve (either an arc or closed curve) in $F$ that meets the train track transversely in a single point on that branch.   

We specify an embedded multicurve in an arbitrarily small
neighborhood of the track $T$ by assigning a \emph{weight} $w_i$, a
non-negative integer, to each branch.   The \emph{weight vector}
$\overrightarrow{w}$ will denote the collection of weights for each
branch.   The weighted track $\{T | \overrightarrow{w}\}$ determines
a unique (up to isotopy) multicurve 
in the neighborhood of the track, where the
weights indicate the number of times the multicurve travels along
each branch. We ensure that the multicurve respects the tangency
conditions at the switches by requiring that the weights at each switch
of $T$ satisfy a \emph{matching equation}: the sum of the weights of the branches
entering a switch is equal to the weight of the branch exiting. For example, in the track in
Figure~\ref{fTTTorus}, the two switches yield the same matching equation,
$p=q+r$. Note that we overload the notation, using a variable (such as $p,q,r$) to both label a branch as well as to indicate the weight of a multicurve on that branch.   We also adopt the convention that a
branch meeting a terminal has weight $1$, which means it represents  the endpoint of an arc in the resulting multicurve.
We say that $T$ \emph{carries} a multicurve $\beta \subset F$ if
there is a set of weights on $T$ that induces $\beta$ and satisfies the matching equations.

We say that a weighted train track $\{T | \overrightarrow{w}\}$ contains a 
\emph{$d$-spiral} if some component of the multicurve determined by the weights forms a
$d$-spiral in
a neighborhood of the track
with a basis arc for a  branch of $T$.
In Figure~\ref{fTrainTrack} the weighted train track contains a 1-spiral because the curve specified by the weights forms a 1-spiral with a basis arc that crosses the branch labeled 3.

\section{Avoiding Spirals on the Torus}
\label{sec:noSpiralsTorus}

In Section~\ref{sec:nospiraltorus} we construct a spiral-free drawing of two (reduced) curves on the torus with an arbitrary number of intersection points. As a preparatory step, Section~\ref{sec:TorusCurves} shows how to build a pair of closed curves avoiding $2$-spirals.

\subsection{Fibonacci Curves: 2-Spiral-Free Curves on the Torus}
\label{sec:TorusCurves}

Our goal is to build for any given $m$ a pair of closed curves on the torus that have at least $m$ intersections but that do not form a $d$-spiral for $d \ge 2$.

Fortunately, it is easy to represent essential closed curves on the torus, see for example Stillwell~\cite[Sections 6.2.2, 6.4.2, 6.4.3]{Stillwell93}. First, one chooses a (non-unique) \emph{basis}, which is a pair of curves, $\mu$ and $\lambda$ that intersect once. We can think of the longitude, $\lambda$, as the curve that goes the ``long'' way around the torus once and the meridian, $\mu$, as a curve that goes once around the ``short'' way. (This is a highly prejudicial view, but one that won't lead us astray).  Then any closed multicurve on the torus, $\gamma$, can be represented by an ordered pair of integers, measuring the number of times the curve wraps longitudinally and the number of times the curve wraps meridianally.

\begin{figure}
\begin{center}
\includegraphics{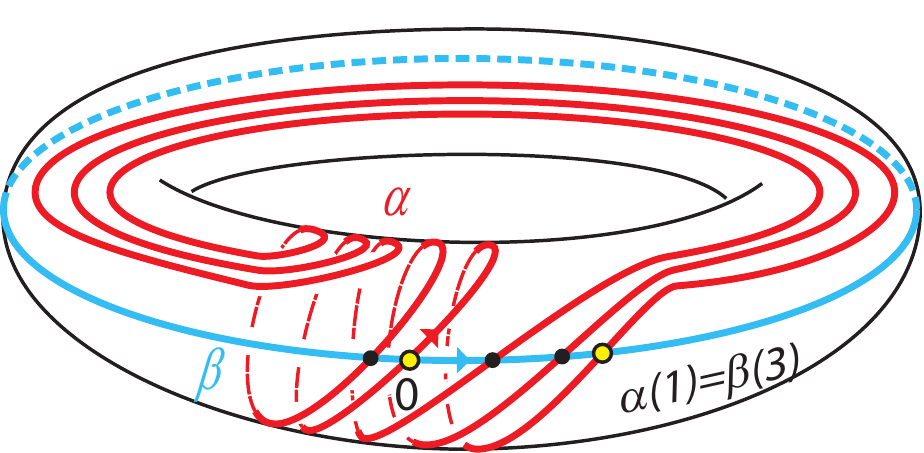}
\caption{Computing the shift, $q=3$. The curves $\alpha$ and $\beta$ intersect with pattern $(5,3)$.}
\label{fShiftTorus}
\end{center}
\end{figure}

Now, we classify intersection patterns between two closed curves in the torus, see Figure~\ref{fShiftTorus}. Let $\alpha$ and $\beta$ be oriented essential simple closed curves on the torus with reduced intersection consisting of $p$ points.  Choose one of the intersection points and label it as the origin $0$.  On each curve, start at the origin and travel along the curve in the direction of its orientation.  This induces a parameterization of each curve by the interval $[0,p)$, where the intersection points are $\{ \alpha(0)=0,\alpha(1),\dots,\alpha(p-1)\}$ along $\alpha$ and $\{ \beta(0)=0,\beta(1),\dots,\beta(p-1)\}$ along $\beta$. We write $\alpha( [a,b] )$ for the subcurve of $\alpha$ between points $\alpha(a)$ and $\alpha(b)$ (and similarly for $\beta$).
We say that the \emph{shift} of $\alpha$ with respect to $\beta$ is the value $q$ for which $\beta(q) = \alpha(1)$.  It is well known that, for curves with reduced intersection, the intersection pattern is transitive; that is, there is an orientation-preserving homeomorphism of the torus that preserves $\alpha$ and $\beta$ and takes any intersection point to any other.  It follows that the shift $q$ is independent of the choice of the origin $0$.

\begin{figure}
\begin{center}
\begin{overpic}
{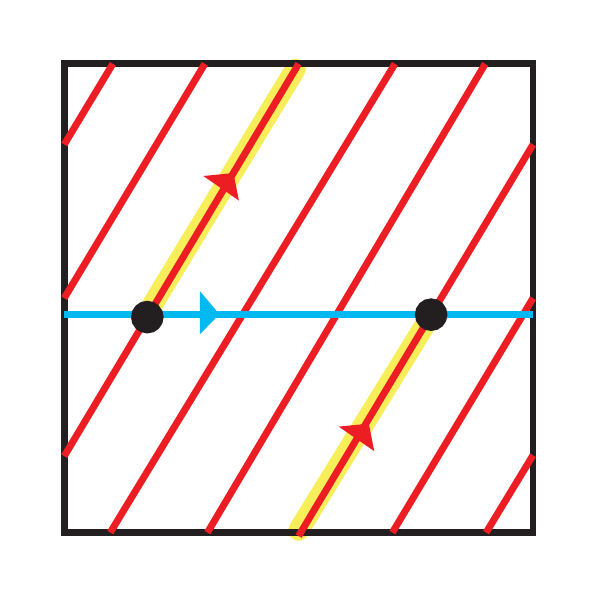}
\put(2,45){$\beta$}
\put(46,95){$\alpha$}
\put(33,1){$\alpha(1)=\beta(3)$}
\end{overpic}
\qquad
\begin{overpic}
{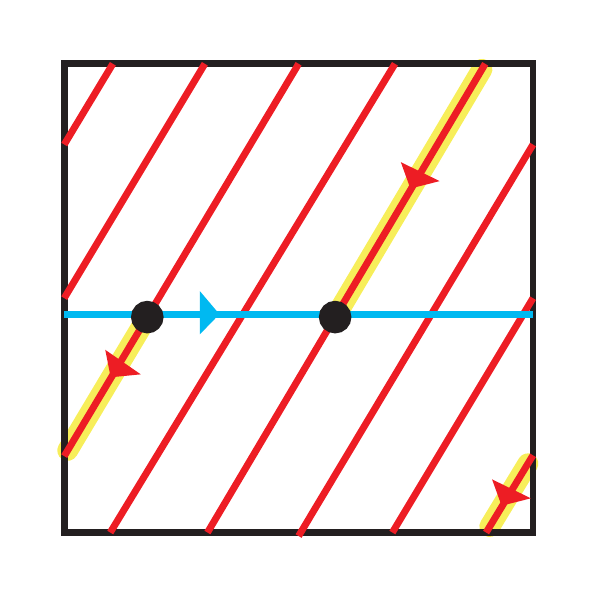}
\put(2,45){$\beta$}
\put(46,95){$\alpha$}
\put(33,1){$\alpha(1)=\beta(2)$}
\end{overpic}
\caption{Reversing $\alpha$ (or $\beta$) changes the intersection pattern from $(p,q)$ to $(p,p-q)$.   The highlight traces the curve to compute the shift.}
\label{fPattern}
\end{center}
\end{figure}

If $\alpha$ meets $\beta$ in $p$ points with shift $q$ then $p$ and $q$ are relatively prime and $0 \le q < p$, and we will say that $\alpha$ intersects $\beta$ with \emph{pattern} $(p,q)$.  
Also note that reversing the direction of $\alpha$ moves the intersection point $q$ units in the opposite direction along $\beta$, so the the resulting shift will be $p - q$.

We say that a $d$-spiral between oriented curves $\alpha$ and $\beta$ is \emph{positive} if the $d+2$ intersections that define the spiral are encountered in the same order when traveling positively along both $\alpha$ and $\beta$.

\begin{remark} It is not hard to see that if $\alpha$ and $\beta$ are essential simple closed curves on the torus whose reduced intersection consists of $3$ or more points, then $\alpha$ and $\beta$ form a $1$-spiral in the torus, since for any intersection pattern $(p,q)$ we have $\min(q,p-q)<p/2$.
\end{remark}

Let $F_0 = 0, F_1 = 1, F_2 = 1, \ldots$ be the sequence of Fibonacci numbers.  
The goal of this section is to prove Theorem~\ref{tFibsNo2Spiral} which says that a pair of simple closed curves on the torus with intersection pattern $(F_n,F_{n-1})$ do not form a 2-spiral.  The next lemma rules out the possibility of a \emph{negative} 2-spiral for, as  noted above, a \emph{negative} 2-spiral between curves  with pattern $(F_n,F_{n-1})$ is equivalent to a \emph{positive} 2-spiral between curves with pattern $(F_n,F_n-F_{n-1}) = (F_n,F_{n-2})$. The proof is by induction. After showing that there are no ``wide'' 2-spirals, we show that a positive 2-spiral between curves with pattern  $(F_n,F_{n-2})$ implies a positive 2-spiral with pattern $(F_{n-2},F_{n-4})$.

\begin{lemma}
\label{lemma_no_positive_2spiral} 
Let $\alpha$ and $\beta$ be simple closed curves on the torus with reduced intersection.
If $\alpha$ intersects $\beta$ with pattern $(F_n,F_{n-2})$, $n \ge 2$, then $\alpha$ and $\beta$ form no positive $2$-spiral.
\end{lemma}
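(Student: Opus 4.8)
The plan is a downward induction on $n$, where each step replaces $n$ by $n-2$.

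\emph{Setup and base cases.} Work with the primitive homology classes $[\alpha],[\beta]\in H_1(T)\cong\mathbb{Z}^2$, so that $i(\alpha,\beta)=|\det([\alpha],[\beta])|=F_n$; after a change of basis we may take $[\beta]=(1,0)$ and $[\alpha]=(q,F_n)$ with $q\equiv\pm F_{n-2}\pmod{F_n}$ realizing the pattern. For $n\in\{2,3\}$ the curves have at most $F_3=2$ intersection points, whereas Lemma~\ref{lemAnnulusBound} (with $d=2$ and $k,l\ge 1$) shows that any $2$-spiral forces at least $2\cdot1\cdot1+1+1=4$ of them; so there is no $2$-spiral and the claim holds. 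Assume now $n\ge 4$ and suppose, for contradiction, that $\alpha$ and $\beta$ form a positive $2$-spiral with respect to an annulus $A$. Its core $\delta$ is essential (two reduced spanning arcs in a disk meet at most once, too few for a $2$-spiral), and $k:=i(\alpha,\delta)\ge 1$, $l:=i(\beta,\delta)\ge 1$ (a curve disjoint from $\delta$ is isotopic to $\delta$ and cannot supply a spanning subarc of $A$).

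\emph{No wide $2$-spirals.} Lemma~\ref{lemAnnulusBound} gives $F_n\ge 2kl+k+l$. The integral linear-algebra identity $\det([\alpha],[\beta])\,[\delta]=\pm i(\beta,\delta)[\alpha]\pm i(\alpha,\delta)[\beta]$ shows that $F_n$ divides $\pm l[\alpha]\pm k[\beta]$, which in the coordinates above becomes a congruence of the shape $k\equiv\pm F_{n-2}\,l\pmod{F_n}$. Feeding this into $2kl+k+l\le F_n$ and using elementary estimates on the growth of the Fibonacci numbers pins $(k,l)$ down to a short, explicit list; equivalently, the spiral annulus $A$ may be taken with $\delta$ in a bounded list of isotopy classes, namely the near-balanced continued-fraction convergents of $F_{n-2}/F_n$, for which $i(\alpha,\delta)$ and $i(\beta,\delta)$ are both of order $F_{\lfloor n/2\rfloor}$. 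For small $n$ (roughly $n\le 10$) this list turns out to be empty, which already proves the lemma in that range.

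\emph{The reduction step.} Fix such a $\delta$ and let $A$ be a regular neighborhood of $\delta$; then $\alpha$ and $\beta$ cross $A$ in $k$, respectively $l$, parallel spanning arcs, one pair of which carries the $2$-spiral, while cutting $T$ along $\delta$ records the rotation with which these arcs reconnect across the complementary annulus --- data that determines the pattern. I would then unwind a single winding of the spiral (a power of the Dehn twist about $\delta$ followed by an ambient isotopy that straightens the complementary arcs), controlling the change in intersection numbers by the rectangle decomposition $R_1\cup R_2$ from the proof of Lemma~\ref{lemLEQ2}, and verifying with the signed-intersection argument of Remark~\ref{rem:counterwind} that the drawing stays reduced and that the spiral stays positive and of width exactly $2$. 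The outcome is a reduced pair of curves on $T$ still carrying a positive $2$-spiral, but with pattern pushed two steps down the Fibonacci recursion, to $(F_{n-2},F_{n-4})$, contradicting the induction hypothesis.

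\emph{Main obstacle.} The delicate point is the reduction step: checking that the unwinding move hits precisely the pattern $(F_{n-2},F_{n-4})$ rather than some other Farey descendant, that it carries a positive $2$-spiral to a positive $2$-spiral (and not to a $1$- or a $3$-spiral), and that it does not introduce a bigon. The arithmetic in the ``no wide spirals'' step is routine but must be carried out separately according to the residue of $n$ modulo a small period.
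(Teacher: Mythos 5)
You have the right skeleton: induct downward by two, pushing a purported positive $2$-spiral from pattern $(F_n,F_{n-2})$ to $(F_{n-2},F_{n-4})$. That matches the paper. But both key steps remain unexecuted and, as stated, would not close.

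\textbf{The ``no wide spirals'' step.} You invoke a determinant identity, a congruence $k\equiv\pm F_{n-2}l \pmod{F_n}$, and ``near-balanced continued-fraction convergents'' to constrain $(k,l)=(i(\alpha,\delta),i(\beta,\delta))$, and assert the resulting list is empty for $n\le 10$. None of this is worked out, and it is not the form of constraint you actually need. The paper's argument is purely positional and short: parameterize the intersection points by $[0,F_n)$ along each curve as in the definition of shift, note the four intersections defining the spiral occur at $\beta(0),\beta(w),\beta(2w),\beta(3w)$ and simultaneously at $\alpha(0),\alpha(h),\alpha(2h),\alpha(3h)$ for some width $w$ and height $h$, and observe that if $F_{n-2}\le 3w$ then $\alpha(1)=\beta(F_{n-2})$ would fall inside $\beta([0,3w])$, forcing $h=1$ and $w=F_{n-2}$; but then $3F_{n-2}<F_n=2F_{n-2}+F_{n-3}$, i.e.\ $F_{n-2}<F_{n-3}$, a contradiction. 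So $F_{n-2}>3w$. Your homological machinery is aimed at a different target and does not produce this inequality.

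\textbf{The reduction step.} You propose unwinding via ``a power of the Dehn twist about $\delta$.'' A Dehn twist is a self-homeomorphism of the torus and therefore preserves geometric intersection numbers: $i(\tau_\delta\alpha,\tau_\delta\beta)=i(\alpha,\beta)$. Applied to both curves it does not descend the Fibonacci sequence at all. If what you mean is to twist only one of the two curves, then you are performing a surgery on a single curve, which is the right idea --- but then the work is in choosing the surgery so that the $2$-spiral survives and the pattern lands exactly on $(F_{n-2},F_{n-4})$, and you explicitly flag that verification as the open obstacle. The paper avoids all of this by an explicit construction: with $F_{n-2}>3w$ in hand, the subarcs $\alpha([0,1])$ and $\beta([0,F_{n-2}])$ meet only at their endpoints, so $\beta':=\alpha([0,1])\cup\beta([0,F_{n-2}])$ is a simple closed curve; after a small push-off it meets $\alpha$ in exactly $F_{n-2}$ points, all with the same sign, so $\beta'$ is essential and $\alpha,\beta'$ have reduced intersection. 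Since the spiral lies on $\alpha([0,3h])$ and $\beta([0,3w])\subset\beta'$, $\alpha$ and $\beta'$ still form a positive $2$-spiral. The shift of $\alpha$ relative to $\beta'$ is then computed directly: $\alpha(1)=\beta(F_{n-2})$ and $\alpha(2)=\beta(2F_{n-2})$ lie outside $\beta([0,F_{n-2}-1])$, while $\alpha(3)=\beta(3F_{n-2})$ returns at position $3F_{n-2}-F_n=F_{n-4}$. So the new pattern is $(F_{n-2},F_{n-4})$, and the induction closes.

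In short: your plan identifies the correct recursion but leaves precisely the two nontrivial steps as sketches, and the ``Dehn twist'' mechanism as written cannot decrease the intersection count. The missing idea is to keep $\alpha$ fixed and replace $\beta$ by the concatenation $\beta'=\alpha([0,1])\cup\beta([0,F_{n-2}])$, whose shift is then an elementary Fibonacci computation.
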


\begin{proof}
We induct on $n$.   The result is trivially true for $n \le 4$ because $\alpha$ and $\beta$ have at most $F_4 = 3$ intersections, not enough to form a $2$-spiral which requires at least $4$ distinct intersections.

\begin{figure}
\begin{center}
\includegraphics[height=1.5in]{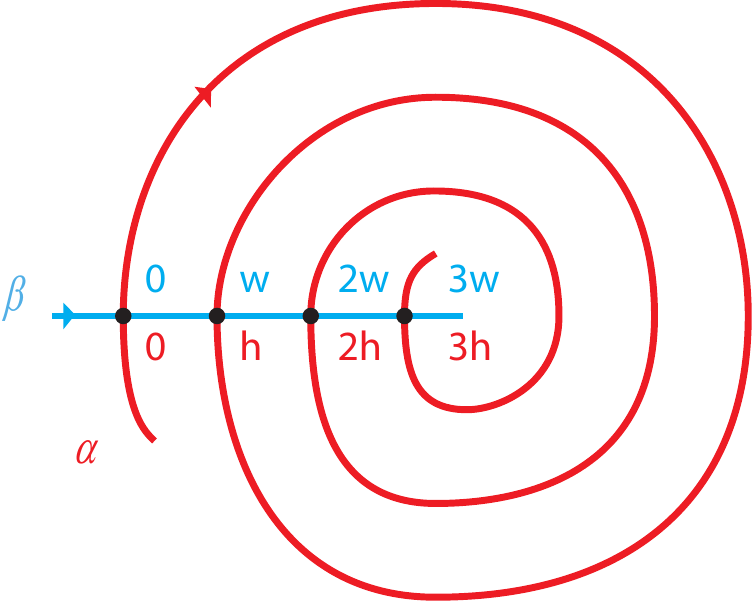}
\qquad
\includegraphics[height=1.5in]{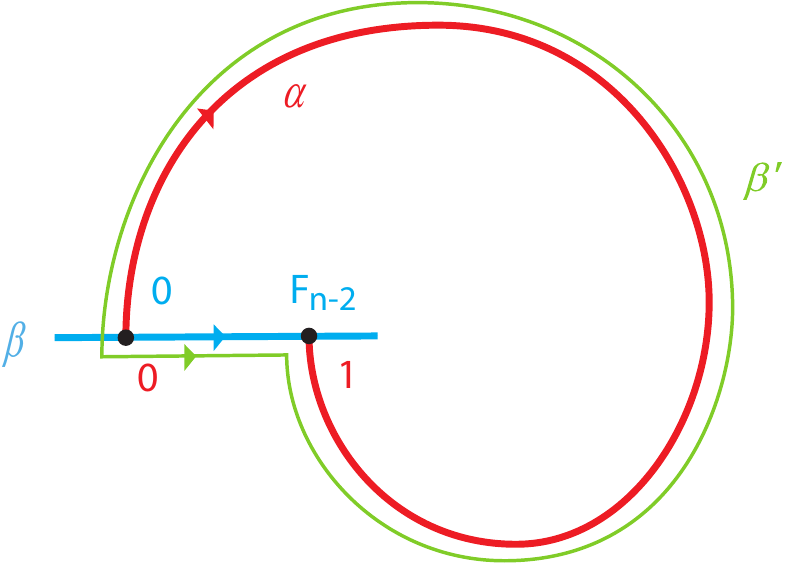}
\caption{A 2--spiral, forming $\beta'$.}
\label{fDoubleSpiral}
\end{center}
\end{figure}

Suppose that $n\ge 4$ and that $\alpha$ and $\beta$ form a positive $2$-spiral. See Figure~\ref{fDoubleSpiral}. The complement of the spiral has two rectangular regions.  Each rectangle has subarcs of $\alpha$ as its left and right sides and subarcs of $\beta$ as its top and bottom sides.  As the curves have reduced intersection, any other arc (not drawn) that crosses one side of a rectangle, say the left, must continue through the rectangle and cross the opposite, say right, side.  So parallel sides of the rectangles have the same number of intersections.  Thus we can 
define the \emph{width} and the \emph{height} of the spiral as the positive integers $w$ and $h$, respectively, such that the four intersections that define the spiral occur at positions $\beta(0) = \alpha(0) = 0, \beta(w)= \alpha(h), \beta(2w)= \alpha(2h)$ and $\beta(3w)=\alpha(3h)$.

First, a 2-spiral cannot be too wide, specifically we show that $F_{n-2} > 3w$:   Suppose to the contrary that $F_{n-2} \le 3w$.  Since  $\alpha(1) = \beta(F_{n-2})$, it must be that this point occurs somewhere in the interval $\beta([0,3w])$.  And as it is the first intersection along $\alpha$ it follows that $h=1$ and $w  = F_{n-2}$.  But then $3F_{n-2} = 3w < F_n = 2F_{n-2}+F_{n-3}$, a contradiction.

Since $F_{n-2} > 3w$, the arcs $\alpha([0,1])$ and $\beta([0,F_{n-2}])$ meet only in their endpoints and thus their union $\beta' = \alpha([0,1])  \cup \beta([0,F_{n-2}])$ is a simple closed curve.   Moreover if we slide $\beta'$ slightly in the negative direction with respect to $\beta$, see Figure~\ref{fDoubleSpiral}, it is a closed curve that meets $\alpha$ in $F_{n-2}$ points.   Every time $\alpha$ crosses $\beta'$ at one of these points it crosses in the same direction (same algebraic sign).  It follows both that  $\beta'$ is  essential and that $\alpha$ and $\beta'$ have reduced intersection. 

Now compute the shift of $\alpha$ with respect to $\beta'$:  All intersection points between  $\alpha$ and  $\beta'$ lie on the subarc $\beta([0,F_{n-2}-1])$ of $\beta'$.  So when we travel along $\alpha$, starting from $\alpha(0) = \beta'(0)$, we encounter intersections with $\beta$.  But only those in the interval $\beta([0,F_{n-2}-1])$ are also intersections with $\beta'$.   So when does $\alpha$ return to the interval $\beta([0,F_{n-2}-1])$? The first intersection of $\alpha$ with $\beta$ is the point $\alpha(1)=\beta(F_{n-2})$.  But this is just beyond the interval $\beta([0,F_{n-2}-1])$ so it is not an intersection with $\beta'$.  The second is at $\alpha(2)=\beta(2F_{n-2})$.  But, $2F_n-2 < F_n$, so this is still not an intersection with $\beta'$.  Finally, the third intersection is at the point $\alpha(3) = \beta(3F_{n-2})$.  As we should only consider the latter coordinate modulo $F_n$, we have
\[
3F_{n-2} \mod F_n = 3F_{n-2} - F_n = 3F_{n-2} - (2F_{n-2}+F_{n-3}) = F_{n-4}.
\]

So the shift of $\alpha$ with respect to $\beta'$ is $F_{n-4}$ and $\alpha$ and $\beta'$ intersect with pattern $(F_{n-2},F_{n-4})$.
Moreover, $\alpha$ and $\beta'$, because they have subarcs $\alpha([0,3h])$ and $\beta([0,3w])$ which form a positive 2-spiral, also form a positive 2-spiral.  This contradicts the inductive hypothesis.
\end{proof}

\begin{theorem}
\label{tFibsNo2Spiral}
Let $\alpha$ and $\beta$ be simple closed curves on the torus with reduced intersection. If $\alpha$ intersects $\beta$ with pattern $(F_n,F_{n-1})$, $n \ge 2$, then $\alpha$ and $\beta$ form no $2$-spiral.
\end{theorem}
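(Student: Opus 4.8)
The plan is to treat positive and negative $2$-spirals separately, in both cases reducing to Lemma~\ref{lemma_no_positive_2spiral}.

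\emph{Negative case.} Here I use the orientation-reversal observation already recorded above: a negative $2$-spiral between $\alpha$ and $\beta$ with pattern $(F_n,F_{n-1})$ turns, after reversing the orientation of one of the two curves, into a \emph{positive} $2$-spiral between curves with pattern $(F_n,\,F_n-F_{n-1})=(F_n,F_{n-2})$, which Lemma~\ref{lemma_no_positive_2spiral} forbids. So it remains to rule out a \emph{positive} $2$-spiral with pattern $(F_n,F_{n-1})$, and this is where the work lies.

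\emph{Positive case.} I would reuse the descent from the proof of Lemma~\ref{lemma_no_positive_2spiral}, now starting from pattern $(F_n,F_{n-1})$. Suppose $\alpha$ and $\beta$ form a positive $2$-spiral of width $w$ and height $h$. The no-wide-spiral step gives $F_{n-1}>3w$: if instead $F_{n-1}\le 3w$, then $\alpha(1)=\beta(F_{n-1})$ lies on the spiral arc $\beta([0,3w])$ and, being the first intersection along $\alpha$, must be the second spiral point, forcing $h=1$ and $w=F_{n-1}$; but then $3F_{n-1}=3w<F_n=F_{n-1}+F_{n-2}$ is impossible. In particular $F_{n-1}\ge 4$, so $n\ge 6$; the remaining cases $n\le 5$ carry no positive $2$-spiral (for $n\le 4$ there are fewer than four intersection points, and for $n=5$ one has $F_{n-1}=3\le 3w$, which we just excluded). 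Now form the simple closed curve $\beta'=\alpha([0,1])\cup\beta([0,F_{n-1}])$; the two pieces meet only at their common endpoints because $3w<F_{n-1}$, and pushing $\beta'$ slightly backwards along $\beta$ it meets $\alpha$ in the $F_{n-1}$ points $\beta(0),\dots,\beta(F_{n-1}-1)$, all with the same algebraic sign, so $\beta'$ is essential and has reduced intersection with $\alpha$. Travelling along $\alpha$ from $\alpha(0)=\beta'(0)$, the point $\alpha(1)=\beta(F_{n-1})$ lies just past the arc $\beta([0,F_{n-1}-1])$, while $\alpha(2)=\beta(2F_{n-1}\bmod F_n)=\beta(F_{n-3})$ lies inside it; hence the shift of $\alpha$ with respect to $\beta'$ is $F_{n-3}$ and the pair $(\alpha,\beta')$ has pattern $(F_{n-1},F_{n-3})=(F_{n-1},F_{(n-1)-2})$. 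Since $\alpha([0,3h])$ and $\beta([0,3w])\subset\beta'$ still form a positive $2$-spiral, Lemma~\ref{lemma_no_positive_2spiral} applied with $n-1$ in place of $n$ gives a contradiction.

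The main obstacle is the positive case: one must verify that replacing the shift $F_{n-2}$ by $F_{n-1}$ does not break any step of the descent in Lemma~\ref{lemma_no_positive_2spiral} — the no-wide-spiral estimate, the claim that $\beta'$ is an essential simple closed curve meeting $\alpha$ with reduced intersection, the survival of the positive $2$-spiral (which needs $3w<F_{n-1}$), and the shift computation. The shift is the one genuine difference: with shift $F_{n-1}$ it takes two steps along $\alpha$, rather than three, to re-enter $\beta'$, which is exactly why the descent lands in the pattern $(F_{n-1},F_{(n-1)-2})$ governed by the lemma. The rest is bookkeeping with identities like $2F_{n-1}=F_n+F_{n-3}$.
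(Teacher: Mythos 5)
Your proof follows the same strategy as the paper: handle the negative case by reorienting to pattern $(F_n,F_{n-2})$ and citing Lemma~\ref{lemma_no_positive_2spiral}, then in the positive case rerun the descent to produce $\beta'$ meeting $\alpha$ with pattern $(F_{n-1},F_{n-3})$ and again invoke the lemma. The only deviations are cosmetic — you spell out the base cases $n\le 5$ a bit more carefully and you note explicitly that the shift rolls over after two steps rather than three — so this is essentially the paper's proof.
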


\begin{proof}
First observe that by reorienting one of the curves we obtain curves intersecting with pattern $(F_n,F_n -F_{n-2})=(F_n,F_{n-2})$, which form no positive $2$-spiral by Lemma~\ref{lemma_no_positive_2spiral}, see also Figure \ref{fPattern}. This implies that $\alpha$ and $\beta$ form no negative $2$-spiral.

Suppose that $\alpha$ and $\beta$ form a positive $2$-spiral. We proceed similarly as in proof of Lemma~\ref{lemma_no_positive_2spiral}:   As before, the base cases $n =2,3$ hold because the curves do not have enough intersections.   For $n\ge 4$ we conclude that $F_{n-1} \ge 3w$ because $\beta(F_{n-1}) = \alpha(1)$.   Then we observe $\alpha$ and $\beta' = \beta([0,F_{n-1}]) \cup \alpha([0,1])$ have $F_{n-1}$ intersections.  The shift is computed by noting that $F_{n-1} < F_n < 2F_{n-1}$ so the curve ``rolls over'' after two original shifts, that is, the shift of $\alpha$ with respect to $\beta'$ is 
\[
2 F_{n-1} - F_n = F_{n-1} - F_{n-2} = F_{n-3}.
\]
Thus $\alpha$ and $\beta'$ intersect with pattern $(F_{n-1}, F_{n-3})$ and form a positive $2$-spiral.   This contradicts Lemma~\ref{lemma_no_positive_2spiral}.
\end{proof}

\subsection{Spiral-Free Curves on the Torus}\label{sec:nospiraltorus}

Figure~\ref{fTTTorus} shows two views of the same weighted train track $T_0$ on a
torus.  In the left hand picture
opposite sides of the square are
identified in order to form a torus.
Let $\mu$ be the curve formed by identifying the left and right sides
of the rectangle and oriented upwards, and $\lambda$ the curve formed by identifying the
top and bottom of the rectangle and oriented from left to right. 
Then $\mu$ and $\lambda$ intersect once and thus form a (homology) basis for the torus. Moreover, they also serve as basis curves for two branches of $T_0$. The train track $T_0$ carries all
closed multicurves on the torus whose coordinates $(p,q)$ satisfy $
p \ge 0, q \ge 0$; these coordinates are exactly the weights of the corresponding branches of the train track. (Recall that when considering un-oriented
curves, we may always assume $m\ge 0$.)

\begin{figure}
\begin{center}
\begin{overpic}[height=2in]
{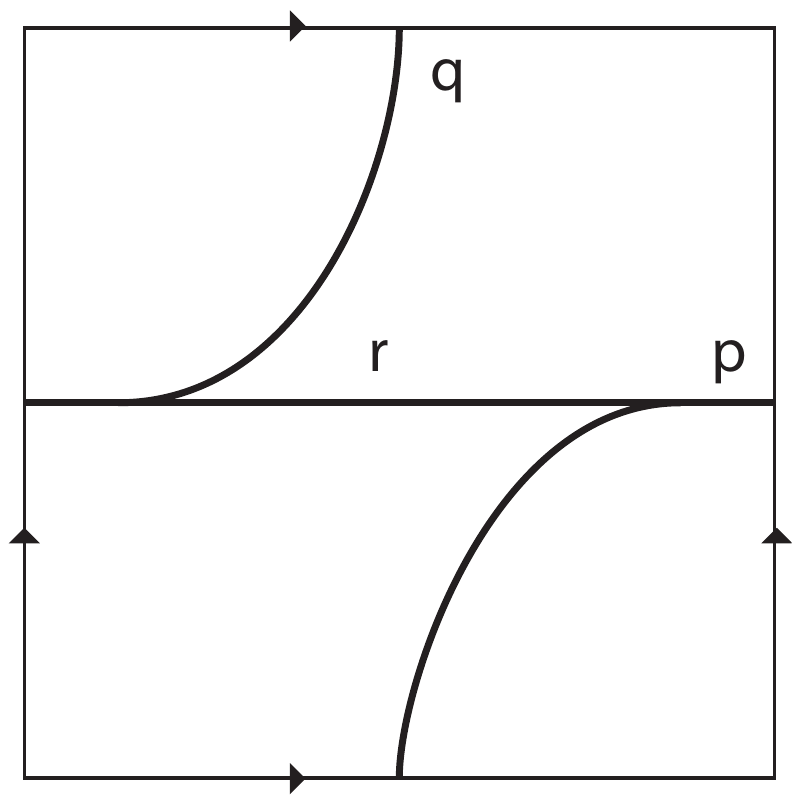}
\put(-5,35){$\mu$}
\put(100,35){$\mu$}
\put(35,-4){$\gamma$}
\put(35,105){$\gamma$}
\put(120,50){$=$}
\end{overpic}
\qquad \qquad
\begin{overpic}[height=2in]
{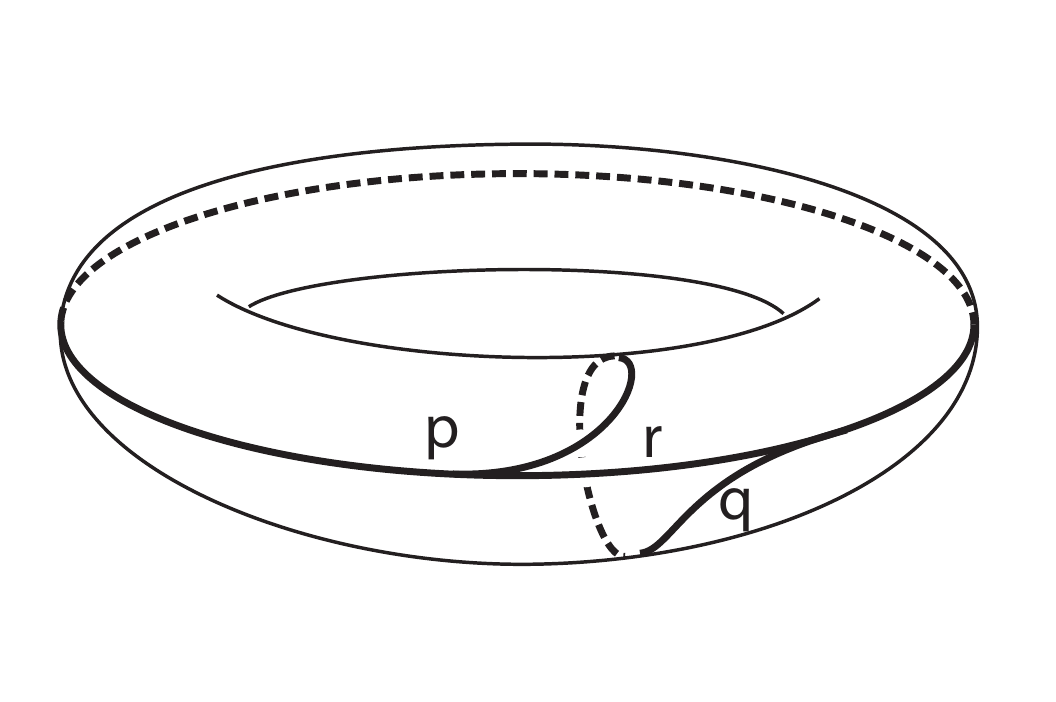}
\end{overpic}
\caption{Two views of the train track $T_0$ on a torus. The torus is formed by identifying opposite sides of the rectangle together, that is, left to right and top to bottom. The weighted train track $\{T_0 | p=F_n,q=F_{n-1},r=F_{n-2} \}$ contains no $d$-spiral for $d \ge 2$.} 
\label{fTTTorus}
\end{center}
\end{figure}

\begin{corollary}
\label{corNo2Spiral}
The weighted train track $\{T_0 | p=F_n,q=F_{n-1},r=F_{n-2} \}$ contains no $d$-spiral for $d \ge 2$.
\end{corollary}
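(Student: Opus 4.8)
The plan is to reduce the statement to Theorem~\ref{tFibsNo2Spiral}. Since $\gcd(F_n,F_{n-1})=1$, the triple $(F_n,F_{n-1},F_{n-2})$ satisfies the matching equation $p=q+r$ and the weighted track determines a \emph{single} simple closed curve $\gamma$ on the torus, with primitive homology class $F_n[\lambda]+F_{n-1}[\mu]$; in particular $\gamma$ is essential and, in the coordinates of Section~\ref{sec:TorusCurves}, $\gamma$ has coordinates $(F_n,F_{n-1})$. By the definition of ``containing a $d$-spiral'', it suffices to show that for every branch $b$ of $T_0$, the curve $\gamma$ forms no $d$-spiral with $d\ge 2$ together with a basis arc for $b$ inside a neighborhood $N(T_0)$ of the track.

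First I would replace the basis arc by an honest basis curve. Each branch $b$ of $T_0$ has a basis curve $c_b$ that is a simple closed curve on the torus meeting $T_0$ in exactly one point (on $b$): for the branch of weight $p$ this is the meridian $\mu$, for the branch of weight $q$ it is the longitude $\lambda$, and for the branch of weight $r$ it is a curve $\nu$ in the homology class $[\mu]+[\lambda]$. (The homology class of $c_b$ is forced: its weight equals $|\gamma\cap c_b|$, and among the classes realizing this geometric intersection number only the one not Dehn--twisted along $\gamma$ can meet $T_0$ a single time.) Inside $N(T_0)$ any basis arc for $b$ is, up to an isotopy preserving its intersections with $\gamma$, a standard transversal to the $w_b$ parallel strands of $\gamma$ along $b$, hence agrees there with $c_b$; the rest of $c_b$ runs through the single complementary disk of $T_0$ and cannot take part in a spiral, which requires an annulus. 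So $\gamma$ forms a $d$-spiral with some basis arc for $b$ in $N(T_0)$ if and only if it forms a $d$-spiral with $c_b$ in the torus. I expect this to be the step needing the most care.

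Next I would check reduced intersection and compute the intersection pattern of $\gamma$ with each $c_b$. Since $c_b$ crosses only branch $b$ and $\gamma$ traverses $b$ exactly $w_b$ times, $|\gamma\cap c_b|=w_b=|[\gamma]\cdot[c_b]|$, so $\gamma$ and $c_b$ are in minimal position and their intersection is reduced. Using that a homeomorphism of the torus realizing an element of $\mathrm{SL}_2(\mathbb{Z})$ preserves intersection patterns, together with the recurrence $F_n=F_{n-1}+F_{n-2}$, a short computation gives: the pattern of $\gamma$ with $\mu$ is $(F_n,F_{n-1})$; with $\lambda$ it is $(F_{n-1},F_{n-2})$; and with $\nu$ it is $(F_{n-2},F_{n-3})$. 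In every case the pattern has the form $(F_m,F_{m-1})$ (for the finitely many small $n$ in which the index would drop below $2$, the curves simply have at most $3$ intersections and there is trivially no $2$-spiral).

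Finally, Theorem~\ref{tFibsNo2Spiral} applies to each pair $(\gamma,c_b)$ and shows they form no $2$-spiral. A $d$-spiral with $d\ge2$ contains a $2$-spiral --- restrict the defining annulus to the sub-annulus bounded so that the two spanning arcs meet it in exactly $4$ of its $d+2$ intersection points, a reduced intersection since a sub-intersection of a reduced intersection is reduced --- so $\gamma$ and $c_b$ form no $d$-spiral for any $d\ge2$. Hence, for every branch $b$, the weighted track carries no $d$-spiral with $d\ge2$, which is the assertion of the corollary. The homology bookkeeping and the appeal to Theorem~\ref{tFibsNo2Spiral} are routine; the genuine obstacle is establishing that a spiral formed with an arbitrary basis arc of a branch, living in $N(T_0)$, is already witnessed by the single canonical basis curve of that branch.
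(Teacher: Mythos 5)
Your proof is correct, but it takes a different route from the paper's. The paper disposes of the branches $q$ and $r$ by a ``sliding'' argument: given a $2$-spiral formed with a basis arc of the $q$ or $r$ branch, it extends that arc across the other small branch and slides it across the switch, converting it into a basis arc for the $p$ branch; this reduces everything to a single case, where $\gamma_n$ and $\mu$ intersect with pattern $(F_n,F_{n-1})$ and Theorem~\ref{tFibsNo2Spiral} finishes. You instead treat the three branches as three separate cases: you promote the basis arc of each branch to a canonical basis \emph{curve} ($\mu$, $\lambda$, $\nu$), argue reduced intersection via minimal position, and compute the intersection pattern for each pair, getting $(F_n,F_{n-1})$, $(F_{n-1},F_{n-2})$, $(F_{n-2},F_{n-3})$ respectively (your $\mathrm{SL}_2(\mathbb{Z})$ computations check out). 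Each is of the form $(F_m,F_{m-1})$, so Theorem~\ref{tFibsNo2Spiral} applies to each. Your approach buys you three independent, fully explicit applications of the theorem and avoids the geometric sliding; the paper's approach buys brevity by collapsing to a single pattern computation. Both proofs gloss over the same two minor points in essentially the same way (promoting the basis arc that witnesses the spiral to a closed basis curve, and the observation that a $d$-spiral with $d>2$ contains a $2$-spiral by restricting the annulus), so neither is more rigorous on that front; you are right that the arc-to-curve step is where the real care is needed.
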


\begin{proof}
	
Let $\gamma_n$ be the (connected) multicurve that lives in a neighborhood of $T_0$ and that is prescribed by the weights $p=F_n,q=F_{n-1},r=F_{n-2}$.  Recall that we say that the weighted train track contains a $2$-spiral if $\gamma_n$ forms a spiral with a basis arc of one of the branches, which are labeled $p$, $q$ and $r$. In fact, if the $2$-spiral uses the basis arc of $q$ or
$r$, then we can extend it to a $2$-spiral that uses the basis arc
of the branch labeled $p$: Suppose, for instance, that the spiral is formed with a basis arc of the, say, $r$ branch. Then extend that arc across the $q$ branch, and  slide it across the switch where the $p$ branch  meets  the $q$ and $r$ branches.  This yields a basis arc for the $p$ branch that defines a 2-spiral for the weighted track.

Thus if $T_0$ contains a $2$-spiral, then  $\gamma_n$ and $\mu$
form a $2$-spiral contradicting Theorem~\ref{tFibsNo2Spiral}, because $\gamma_n$ intersects $\mu$ with pattern
$(F_n,F_{n-1})$, the shift arising from the $F_{n-1}$ arcs that travel along the branch labeled $q$.
\end{proof}

Consider the train track $T_1$ pictured in Figure~\ref{fTT1}(a). We identify opposite edges of the square so this
train track is embedded in the torus.  In this section we will
demonstrate that $\{T_1| p=P=F_n-1, q=Q=F_{n-1}-1, r = R =
F_{n-2}-1,\text{terminals have weight } 1\}$ is spiral-free.  Note that the weights of some branches are not specified, but each such unspecified weight is uniquely determined as the the sum of the weights on the incoming branches.

\begin{figure}
\begin{center}

\begin{overpic}[height=1.75in]
{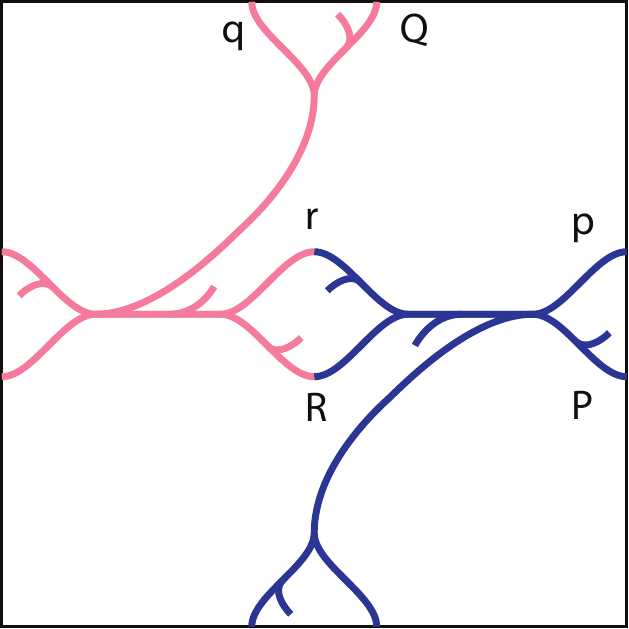}
\put(44,-13){(a)}
\end{overpic}
\qquad
\begin{overpic}[height=1.75in]
{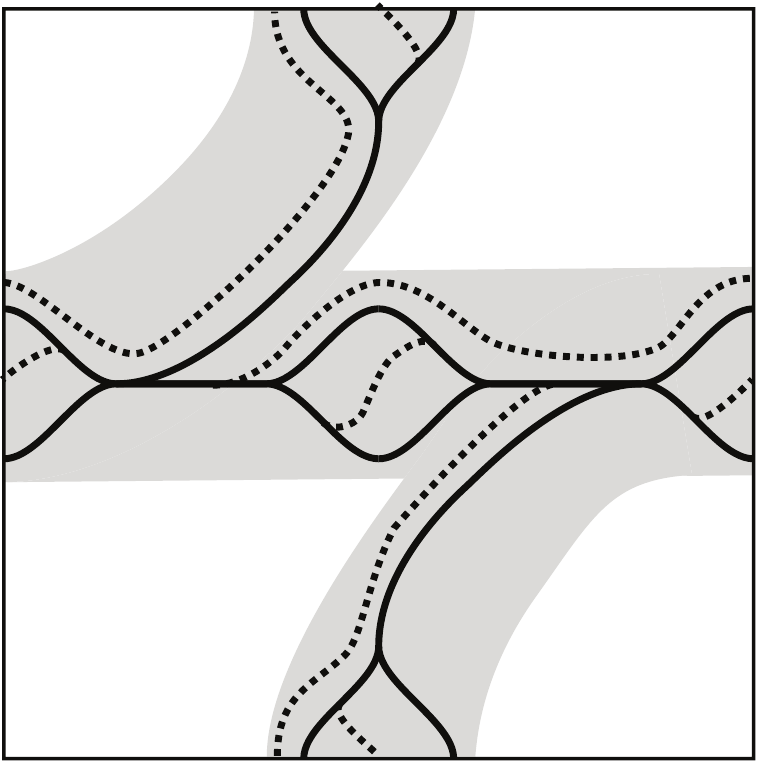}
\put(44,-13){(b)}
\end{overpic}
\qquad
\begin{overpic}[height=1.75in]
{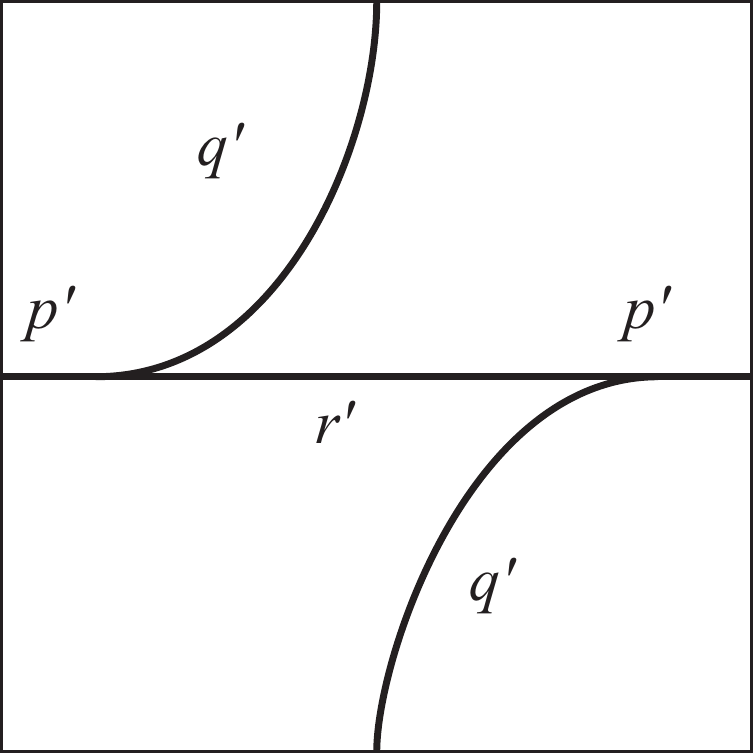}
\put(44,-13){(c)}
\end{overpic}
\vspace{.25in}

 \caption{a) The train track $T_1$.  The weighted train track  $\{T_1| p=P=F_n-1, q=Q=F_{n-1}-1, r = R =
 	F_{n-2}-1,\text{terminals have weight } 1\}$ has no $1$-spirals. b) Extending curves in $T_1$ to closed curves in $T_0$. c) The train track $T_0$.}
 \label{fTT1}
\end{center}
\end{figure}

The most interesting property of the track $T_1$ is that it is fixed under the self-homeomor\-phism of the torus,  call it $\pi$,  that is obtained by rotating the square about its center through the angle $\pi$.    Note that  $\pi$ is an involution, that is, it is its own inverse. Moreover, $\pi$ is orientation-preserving and acts on the branches of $T_1$ by changing the case of each branch, exchanging branches
with lower case labels with branches with upper case labels.  The only fixed points of the involution $\pi$ as it acts on the torus
are:  the center of the square; the corners of the square, all of which are identified to a single point; and the midpoint of each edge of the square
(opposite edges are identified).  Since the track $T_1$ avoids all of
these points, the restriction of $\pi$ to $T_1$ is fixed-point-free.
Fix a small neighborhood of $T_1$, $N(T_1)$,  that also avoids the fixed points, and is fixed setwise by $\pi$, so that $\pi:N(T_1) \rightarrow N(T_1)$ is a self-homeomorphism.  We can also choose short basis arcs for each branch of $T_1$ that are pairwise disjoint and such that their union is preserved by $\pi$ setwise. We highlight the following fact, which is established by the choice of $N(T_1)$.

\begin{lemma}
\label{lemFixedPoint}
The involution $\pi$ has no fixed point in $N(T_1)$. 
\end{lemma}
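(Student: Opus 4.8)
The plan is to pin down the fixed-point set of $\pi$ on the torus explicitly, to verify that $T_1$ misses it, and then to observe that a sufficiently small neighborhood of $T_1$ can be chosen to be simultaneously $\pi$-invariant and disjoint from those fixed points; the lemma then follows, since this is precisely how $N(T_1)$ was selected. First I would identify $\mathrm{Fix}(\pi)$. Realizing the torus as $\mathbb{R}^2/\mathbb{Z}^2$ with the square as $[0,1]^2$, the involution $\pi$ is induced by $(x,y)\mapsto(1-x,1-y)$, which fixes a class $[(x,y)]$ exactly when $2x\in\mathbb{Z}$ and $2y\in\mathbb{Z}$, i.e. $x,y\in\{0,\tfrac12\}$. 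Hence $\pi$ has exactly four fixed points on the torus: the class of $(\tfrac12,\tfrac12)$ (the center of the square), the class of $(0,0)$ (the four corners, all identified), and the classes of $(\tfrac12,0)$ and $(0,\tfrac12)$ (the midpoints of the two pairs of identified edges). In particular $\mathrm{Fix}(\pi)$ is a finite, hence closed, set, which I will call $P$.

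Next I would read off from Figure~\ref{fTT1}(a) that the compact train track $T_1$ is disjoint from $P$: its branches are drawn away from the center, the corner point, and the two edge midpoints. Since $T_1$ is compact, $P$ is closed, and $T_1\cap P=\emptyset$, there is an open set $U$ with $T_1\subseteq U$ and $\overline{U}\cap P=\emptyset$ (take an $\varepsilon$-neighborhood of $T_1$ with $\varepsilon<\tfrac13 d(T_1,P)$). To make the neighborhood $\pi$-invariant, set $N(T_1):=U\cap\pi(U)$. Because $\pi$ is an involution with $\pi(T_1)=T_1$, we get $T_1=\pi(T_1)\subseteq\pi(U)$, so $T_1\subseteq N(T_1)$, and $\pi(N(T_1))=\pi(U)\cap\pi^2(U)=\pi(U)\cap U=N(T_1)$. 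Thus $N(T_1)$ is a $\pi$-invariant open neighborhood of $T_1$, and since $N(T_1)\subseteq U$ is disjoint from $P=\mathrm{Fix}(\pi)$, the restriction $\pi|_{N(T_1)}$ has no fixed point. (If later arguments need $N(T_1)$ smaller or more regular, one may shrink it while preserving $\pi$-invariance by the same $U\cap\pi(U)$ trick, keeping $\overline{N(T_1)}$ disjoint from $P$.)

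The only genuine content here is the explicit computation of $\mathrm{Fix}(\pi)$ together with the (purely visual) check that $T_1$ avoids all four of these points; the rest is routine point-set topology. Indeed, the existence of a small $\pi$-invariant neighborhood disjoint from the fixed points was already folded into the definition of $N(T_1)$ made just before the lemma, so in the write-up the proof can be as short as recalling that construction, with the fixed-point list above supplied for completeness.
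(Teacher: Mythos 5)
Your proposal is correct and follows the same route as the paper: the paper likewise identifies the fixed points of $\pi$ on the torus (the center, the identified corners, and the two identified edge-midpoints), notes that $T_1$ avoids them, and builds $N(T_1)$ as a $\pi$-invariant neighborhood disjoint from that fixed-point set, so the lemma holds by construction. You merely make explicit the computation of $\mathrm{Fix}(\pi)$ in the $\mathbb{R}^2/\mathbb{Z}^2$ model and the $U\cap\pi(U)$ invariance trick, both of which the paper leaves implicit.
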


If any simple arc drawn in $N(T_1)$ is preserved setwise by $\pi$, then $\pi$ would have a fixed point in that arc, hence in $N(T_1)$. Lemma~\ref{lemFixedPoint} then implies the following.

\begin{corollary}
\label{cor_no_invariant_arc}
The involution $\pi$ preserves no simple arc carried by $T_1$.
\end{corollary}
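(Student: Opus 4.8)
The plan is to derive a contradiction with Lemma~\ref{lemFixedPoint} using the elementary fact that every self-homeomorphism of a closed interval has a fixed point. This just makes precise the remark stated immediately before the corollary: a $\pi$-invariant arc in $N(T_1)$ would have to contain a fixed point of $\pi$.

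First I would observe that, by the definition of a multicurve carried by a train track, any simple arc $\alpha$ carried by $T_1$ may be taken to lie in the chosen neighborhood $N(T_1)$, which $\pi$ maps to itself. Suppose, toward a contradiction, that $\pi$ preserves $\alpha$ setwise, i.e. $\pi(\alpha)=\alpha$ as subsets of the torus. Since $\pi$ is a homeomorphism of the torus and $\pi(\alpha)=\alpha$, the restriction $\pi|_\alpha$ is a homeomorphism of $\alpha$ \emph{onto} $\alpha$; transporting it through a homeomorphism $[0,1]\to\alpha$ produces a self-homeomorphism $f$ of $[0,1]$.

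Next I would note that $f$, being a homeomorphism of an interval, is monotone: if increasing it fixes $0$ and $1$; if decreasing then $f(0)=1$ and $f(1)=0$, so $t\mapsto f(t)-t$ changes sign on $[0,1]$ and the intermediate value theorem yields a zero $t_0$. In either case $f$ has a fixed point, hence $\pi$ fixes the corresponding point of $\alpha$, which lies in $N(T_1)$ — contradicting Lemma~\ref{lemFixedPoint}. Therefore no simple arc carried by $T_1$ is preserved setwise by $\pi$.

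There is no genuinely hard step here; the only points to watch are that $\pi|_\alpha$ is really a homeomorphism \emph{onto} $\alpha$ (so the one-dimensional fixed-point argument applies, rather than merely a continuous self-map of a subspace), and that ``carried by $T_1$'' confines $\alpha$ to the $\pi$-invariant neighborhood $N(T_1)$, which is exactly where Lemma~\ref{lemFixedPoint} has content. The explicit list of fixed points of $\pi$ on the torus (the center, the identified corner, and the edge midpoints) plays no direct role; all that is used is that $N(T_1)$ was chosen to avoid them.
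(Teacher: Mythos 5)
Your proof is correct and takes essentially the same approach as the paper's: the paper states tersely that a $\pi$-invariant arc in $N(T_1)$ would contain a fixed point of $\pi$, contradicting Lemma~\ref{lemFixedPoint}, and you simply spell out the elementary reason (a self-homeomorphism of $[0,1]$ has a fixed point, by monotonicity and the intermediate value theorem). The only addition is the explicit bookkeeping that a carried arc lies in $N(T_1)$ and that $\pi|_\alpha$ is a homeomorphism onto $\alpha$, which is exactly what the paper leaves implicit.
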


Note that the train track $T_1$ can be naturally embedded inside a neighborhood of the train track $T_0$, see Figure~\ref{fTT1}(b).  Using this embedding we define an extension of any multicurve $\gamma$ carried by $T_1$ to a \emph{closed} multicurve $\epsilon(\gamma)$ carried by $T_0$ by connecting any endpoints of $\gamma$ at terminals of $T_1$ by the dotted arcs in the figure.   For closed curves $\epsilon$ does not modify the multicurve, and the resulting weights on $T_0$ will be the sum of the upper and lower case weights of $T_1$: $p'=p+P,q'=q+Q,r'=r+R$.   If $\gamma$ consists of
arcs where the weight of each terminal is $1$, then
the weights of the closed multicurve $\epsilon(\gamma)$ carried by
$T_0$ will be $p'=p+P+2,q'=q+Q+2,r'=r+R+2$.  The $+2$s count the extensions and are indicated by dotted lines in Figure~\ref{fTT1}(b).

The involution $\pi$ will preserve a multicurve carried by $T_1$ if
and only if the curve's weights are preserved by $\pi$, that is
$p=P,q=Q,r=R$, and terminals of equal weight are exchanged.

\begin{lemma}
\label{lemInvariantCurve}
The involution $\pi$ preserves no closed connected curve $\gamma$ carried by $T_1$.
\end{lemma}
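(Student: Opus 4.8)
The plan is to rule out a $\pi$-invariant connected closed curve by a parity argument on the train-track weights, in the same spirit as the discussion preceding the lemma. Suppose, for contradiction, that $\pi$ preserves a closed connected curve $\gamma$ carried by $T_1$. Since $\gamma$ is closed it runs along no branch of $T_1$ meeting a terminal (such a branch is a dead end for a strand, which would then have to terminate at the degree-$1$ vertex), so $\gamma$ is completely described by its weights $p,P,q,Q,r,R$ on the branches labeled $p,P,q,Q,r,R$. By the observation stated just before the lemma, the fact that $\pi$ preserves $\gamma$ forces its weight vector to be $\pi$-invariant, i.e.\ $p=P$, $q=Q$, $r=R$.

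Next I would transfer the problem to $T_0$. Embedding $T_1$ inside a neighborhood of $T_0$ as in Figure~\ref{fTT1}(b), the closed curve $\gamma$ is also carried by $T_0$; since $\epsilon$ does not modify a closed multicurve, the induced weights on $T_0$ are $p'=p+P$, $q'=q+Q$, $r'=r+R$, which by the previous paragraph equal $2p,\,2q,\,2r$. In particular all three $T_0$-weights of $\gamma$ are even.

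Finally I would count components. A closed multicurve carried by $T_0$ with weights $(p',q',r')$ (necessarily satisfying $p'=q'+r'$) is empty when $q'=r'=0$, and otherwise consists of exactly $\gcd(q',r')$ pairwise parallel essential simple closed curves --- the standard description of closed multicurves on the torus via the track $T_0$, consistent with $\gamma_n$ in Corollary~\ref{corNo2Spiral}, whose weights $(F_n,F_{n-1},F_{n-2})$ give a connected curve because $\gcd(F_{n-1},F_{n-2})=1$. Here $\gcd(q',r')=\gcd(2q,2r)=2\gcd(q,r)$ is even, so $\gamma$ has an even number of components; being a nonempty curve it then has at least two, hence is disconnected. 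This contradicts the choice of $\gamma$ and proves the lemma.

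I expect the only delicate point to be the bookkeeping at the terminals of $T_1$: making precise that a closed curve carried by $T_1$ uses none of the terminal branches, so that under the embedding into a neighborhood of $T_0$ it really corresponds to the weight vector $(2p,2q,2r)$ rather than $(2p+2,2q+2,2r+2)$. This is exactly the content of the paragraph preceding the lemma (the $+2$'s appear only for multicurves consisting of arcs), so it can be cited rather than reproved; the remaining ingredients are the $\pi$-invariance criterion already established and the routine $\gcd$-count for torus multicurves carried by $T_0$.
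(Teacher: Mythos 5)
Your proof is correct and follows essentially the same route as the paper's: deduce $p=P$, $q=Q$, $r=R$ from $\pi$-invariance, push $\gamma$ to $T_0$ where all weights become even, and conclude disconnectedness from the standard gcd/relative-primality count for torus multicurves (the paper phrases this via the intersection pattern $(2p,2q)$ with $\mu$, which is the same fact since $p'=q'+r'$ gives $\gcd(p',q')=\gcd(q',r')$). Your extra care about terminal branches is a harmless elaboration of the paper's remark that $\epsilon$ does not modify closed curves.
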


\begin{proof} If $\gamma$ is preserved by $\pi$, then its weights must be
preserved by $\pi$, hence $p=P,q=Q,r=R$.  Then $\epsilon(\gamma)$ is
carried by $T_0$ and has even weights, each a sum of an equal
lowercase and uppercase weight. The curves $\gamma$ and $\mu$ then have intersection pattern $(2p,2q)$. Since $2p$ and $2q$ are not relatively prime and $\mu$ is connected, the curve $\gamma$ cannot be connected; see the discussion of intersection patterns in Section~\ref{sec:TorusCurves}.
\end{proof}

\begin{lemma}
\label{lemIsotopic}
 Let $\gamma$ be a closed curve carried by $T_1$.
Then $\epsilon(\gamma)$ and $\epsilon(\pi(\gamma))$ have the same weights (and hence are isotopic) in $T_0$.
\end{lemma}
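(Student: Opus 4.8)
The plan is to prove the statement by directly computing the weight vectors that $\epsilon(\gamma)$ and $\epsilon(\pi(\gamma))$ induce on $T_0$ and observing that they coincide; the isotopy claim then follows at once from the fact (recorded in the Train Tracks subsection) that a weighted train track $\{T_0 \mid \overrightarrow{w}\}$ determines a multicurve uniquely up to isotopy in a neighborhood of the track, hence up to isotopy in the torus.

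First I would record the effect of $\pi$ on weights. Since $\gamma$ is carried by $T_1$, it has a weight vector whose free entries are the lowercase weights $p,q,r$ on the branches labeled $p,q,r$ and the uppercase weights $P,Q,R$ on the branches labeled $P,Q,R$, with all remaining branch weights forced by the matching equations. By the discussion preceding the lemma, $\pi$ is an orientation-preserving self-homeomorphism of the torus carrying $T_1$ to itself and exchanging each lowercase branch with the corresponding uppercase branch; hence $\pi(\gamma)$ is again carried by $T_1$, now with lowercase weights $P,Q,R$ and uppercase weights $p,q,r$. Next I would apply the extension operator $\epsilon$. Because $\gamma$ (and likewise $\pi(\gamma)$) is a \emph{closed} curve, it has no endpoints at terminals of $T_1$, so $\epsilon$ does not alter the curve and there are no ``$+2$'' corrections: under the embedding of $T_1$ into $N(T_0)$, the weights of $\epsilon(\gamma)$ on $T_0$ are the sums of the lowercase and uppercase weights along the three pairs of branches, namely $p'=p+P$, $q'=q+Q$, $r'=r+R$. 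Applying the same recipe to $\pi(\gamma)$, whose lowercase and uppercase weights have merely been swapped, yields weights $P+p$, $Q+q$, $R+r$ on $T_0$ — the same three numbers. Thus $\epsilon(\gamma)$ and $\epsilon(\pi(\gamma))$ carry identical weight vectors on $T_0$, and by the uniqueness-up-to-isotopy property they are isotopic.

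There is essentially no analytic obstacle here; the content is pure bookkeeping, and the lemma is really just the observation that $\epsilon$ symmetrizes over exactly the pairs of lowercase/uppercase branch weights that $\pi$ interchanges. The only points that need a word of care are: (i) that $\pi$ exchanges the lowercase and uppercase branch weights of $T_1$ precisely, which rests on the rotational symmetry of $T_1$ together with $\pi$ being orientation-preserving, both set up above; and (ii) that, $\gamma$ being closed, $\epsilon$ introduces no terminal-contribution terms, so that the two extended weight vectors are genuinely identical rather than merely differing by a permutation.
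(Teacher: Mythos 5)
Your proof is correct and follows exactly the paper's argument: $\pi$ swaps the lowercase and uppercase weights, $\epsilon$ sums each lowercase/uppercase pair (with no terminal corrections since $\gamma$ is closed), so the resulting weight vectors on $T_0$ agree and the curves are isotopic. The paper's own proof is a two-sentence version of the same bookkeeping.
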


\begin{proof} The involution changes the case of each of the weights of
$\gamma$, then  $\epsilon$ sums each lower and upper case weight. Thus the curves
$\epsilon(\gamma)$ and $\epsilon(\pi(\gamma))$ have equal weights
and are hence isotopic in $T_0$. \end{proof}

When $p=P,q=Q, r=R$, then the involution $\pi$ fixes the weight vector $\overrightarrow{w}$ as well as the track $T_1$, and we will say that $\{T_1 | \overrightarrow{w}\}$ is \emph{invariant under $\pi$.}  
Recall that a weighted train track is really a shorthand for a multicurve drawn in a small neighborhood of the track.  The weight component for each branch specifies the number of times the multicurve travels along that branch. If $\{T_1 | \overrightarrow{w}\}$ is invariant under $\pi$, then we will assume that the multicurve has been drawn carefully in $N(T_1)$ so that it is fixed setwise by $\pi$.   
We will say that $\{T_1 | \overrightarrow{w}\}$ contains two disjoint spirals if two disjoint spirals are formed in a neighborhood of $T_1$ by the multicurve prescribed by $\overrightarrow{w}$ and some basis arc for a branch of $T_1$.

\begin{proposition}
\label{pTwoSpirals}
 Suppose that the weighted train track $\{T_1 | \overrightarrow{w}\}$ is invariant under
$\pi$ and contains a spiral.  Then $\{T_1 | \overrightarrow{w}\}$
contains two disjoint spirals.
\end{proposition}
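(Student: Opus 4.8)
The plan is to use the involution $\pi$ to reflect a single spiral into a second one, and then to exploit the fact that $\pi$ is fixed-point-free on $N(T_1)$ (Lemma~\ref{lemFixedPoint}) to force the two spirals apart. Since $\{T_1\mid\overrightarrow{w}\}$ is invariant under $\pi$, we may assume the multicurve $\gamma$ it determines is drawn in $N(T_1)$ with $\pi(\gamma)=\gamma$ setwise, and that the chosen short basis arcs of the branches of $T_1$ form a pairwise disjoint, $\pi$-invariant family. Suppose $\{T_1\mid\overrightarrow{w}\}$ contains a spiral $\sigma$: a component $c$ of $\gamma$ forms a $d$-spiral ($d\ge 1$) with a basis arc $b$ for a branch $e$, and (after isotoping $b$ to the chosen short basis arc of $e$) let $R\subseteq N(T_1)$ be the associated singular rectangle, a disk whose boundary consists of a subarc $\delta\subseteq c$ and a subarc $\beta\subseteq b$ meeting in a single point $x$. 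Applying $\pi$, the component $\pi(c)$ of $\gamma$ forms a $d$-spiral $\pi(\sigma)$ with the basis arc $\pi(b)$ for the branch $\pi(e)$, with singular rectangle $\pi(R)$. Thus $\{T_1\mid\overrightarrow{w}\}$ contains a second spiral, and it remains to check that it is distinct from and disjoint from $\sigma$.

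Distinctness is immediate: $c=\pi(c)$ would contradict Corollary~\ref{cor_no_invariant_arc} (if $c$ is an arc) or Lemma~\ref{lemInvariantCurve} (if $c$ is a closed curve), and $\pi$ fixes no branch, so $e\ne\pi(e)$ and $b\ne\pi(b)$; in particular $c\cap\pi(c)=\emptyset$ (distinct components of a multicurve) and $b\cap\pi(b)=\emptyset$ (distinct members of the disjoint basis-arc family). For disjointness, first suppose the circles $\partial R$ and $\partial\pi(R)$ do not cross; then the disks $R$ and $\pi(R)$ are disjoint or nested, and nesting is impossible, since $R\subseteq\pi(R)$ would give $\pi(R)\subseteq R$ on applying $\pi$, hence $R=\pi(R)$, and then $\pi$ would carry the unique self-intersection point $x$ of $\partial R$ to the unique self-intersection point of $\partial\pi(R)=\partial R$, forcing $\pi(x)=x$ and contradicting Lemma~\ref{lemFixedPoint}. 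Hence disjointness can fail only if $\partial R$ and $\partial\pi(R)$ actually cross, and since the $c$-parts of these boundaries avoid the $\pi(c)$-parts and the $b$-parts avoid the $\pi(b)$-parts, any such crossing forces $c$ to meet $\pi(b)$ — that is, the component $c$ traverses the branch $\pi(e)$.

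The main obstacle is to handle this last case using the concrete shape of $T_1$. The approach I would take is to replace $\sigma$ by an innermost $1$-sub-spiral, whose singular rectangle can be isotoped into an arbitrarily thin neighborhood of $\delta_0\cup\beta_0$, where $\delta_0\subseteq c$ is a single ``loop'' subarc of the spiral and $\beta_0\subseteq b$ is a short subarc; then I would show that in $T_1$ such a loop $\delta_0$ stays in the portion of $N(T_1)$ near $e$ and never reaches the branch $\pi(e)$ — equivalently, that there is a $\pi$-invariant essential curve in $N(T_1)$ separating $\sigma$ from $\pi(\sigma)$. Given this, $\delta_0\cup\beta_0$ and its image $\pi(\delta_0)\cup\pi(\beta_0)$ lie at positive distance, so sufficiently thin singular rectangles are disjoint, yielding two disjoint spirals as claimed. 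Finally, I would record for later use (Lemma~\ref{lemConcentricSpirals}) that the two spirals are automatically ``concentric'': $\pi$ is rotation by $\pi$, hence acts as $-\mathrm{id}$ on $H_1$ of the torus, so the boundary curve of the annulus carrying $\pi(\sigma)$ is isotopic to that of the annulus carrying $\sigma$.
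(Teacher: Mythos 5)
Your setup matches the paper's: you obtain the second spiral as the image $\pi(R)$ of the singular rectangle, you correctly establish $c\cap\pi(c)=\emptyset$ and $b\cap\pi(b)=\emptyset$ via Corollary~\ref{cor_no_invariant_arc} and Lemma~\ref{lemInvariantCurve}, and your treatment of the nested case is sound. The genuine gap is exactly where you flag ``the main obstacle,'' and the route you sketch for it does not work. First, the premise that the loop $\delta_0$ of an innermost sub-spiral ``stays in the portion of $N(T_1)$ near $e$ and never reaches $\pi(e)$'' is false for the curves at hand: a loop of a spiral is a subarc of the multicurve that leaves the basis arc and returns to it, and for the Fibonacci-weighted $T_1$ such a subarc winds around the torus and traverses essentially every branch, in particular $\pi(e)$. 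Correspondingly, no $\pi$-invariant curve in $N(T_1)$ separates $\sigma$ from $\pi(\sigma)$ --- as you yourself note at the end, the two spiral annuli are parallel (concentric) on the torus, so they interleave rather than sit in separate halves of $N(T_1)$. Second, the idea of isotoping the singular rectangle into ``an arbitrarily thin neighborhood of $\delta_0\cup\beta_0$'' is not available: $R$ is a disk whose boundary is the fixed curve $\delta_0\cup\beta_0$, and a disk cannot be pushed into an annular neighborhood of its own boundary; the region $R$ (and the spiral annulus, which must contain it) is pinned down by the curves and cannot be made thin. So the case in which $c$ crosses $\pi(b)$ --- which genuinely occurs --- is left unresolved, and that is the whole content of the proposition.

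The paper closes this case by analyzing $R\cap\pi(R)$ directly. Since $\gamma\cap\pi(\gamma)=\emptyset$ and $\beta\cap\pi(\beta)=\emptyset$, every component of $R\cap\pi(R)$ is a rectangle containing a corner of $R$ and a corner of $\pi(R)$ in opposite corners; one then rules out each of $\pi(b)\in R$, $\pi(a)\in R$ and $\pi(c)\in R$ by producing either a fixed point of $\pi$ (via Brouwer's fixed-point theorem, contradicting Lemma~\ref{lemFixedPoint}) or a closed connected $\pi$-invariant curve carried by $T_1$, built from diagonals of the overlap rectangles and their $\pi$-images (contradicting Lemma~\ref{lemInvariantCurve}). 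That corner analysis is the missing ingredient; without it or a substitute your argument does not go through.
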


This result is based on analyzing the rectangle cut out by the spiral, see Figure~\ref{fSpiral}, and how the rectangle meets its image under the involution $\pi$.  In particular, if the rectangle is not disjoint from its image under $\pi$, then we produce a fixed curve, contradicting earlier lemmas.  It will follow that these two rectangles, and their corresponding spirals, are disjoint.

\begin{proof}

Recall that, since the weighted train track $\{T_1 | \overrightarrow{w}\}$ is invariant under $\pi$,  we have constructed the following objects, each of which is fixed setwise but contains no fixed point under the action of 
$\pi$: a) a neighborhood $N(T_1)$ of $T_1$, b) a collection of disjoint basis arcs, one for each branch, and c) the multicurve prescribed by $\overrightarrow{w}$.

Suppose that $\{T_1 |  \overrightarrow{w}\}$ does contain a spiral.   Then  $\gamma$, a subarc of the multicurve  prescribed by $\overrightarrow{w}$, and $\beta$, a subarc of a basis arc, bound a rectangle $R$ that meets itself in a single point as in Figure~\ref{fSpiral}. 
Without loss of generality, we may assume that the rectangle $R$ is oriented as in Figure~\ref{fRectangleIntersections}; namely that $b$, the point of self-contact, occurs at the upper-right and lower-left corners of $R$, and $a$ and $c$ are the top-left and bottom-right corners of $R$, respectively. 
The top and bottom sides of the rectangle are subarcs of $\gamma$ that connect at $b$ and the left and right sides are subarcs of the basis arc $\beta$ that also connect at $b$. Note that the entire drawing is reduced in $T_1$, because no
component of the multicurve can form a bigon with a basis arc.

We now consider the possible patterns of intersection $R \cap
\pi(R)$ with the aim of showing that it is empty, see Figure~\ref{fRectangleIntersections}.    Since the
involution $\pi$ exchanges $R$ and $\pi(R)$, the pattern of
intersection appears precisely the same in each rectangle.    The rectangle $R$ will be cut across vertically by some subarcs of basis arcs, and horizontally by some subarcs of the multicurve specified by the weight vector.   We can measure the \emph{length} of a subarc of the multicurve or basis arc to be the number of intersections it has with the basis arcs or multicurve, respectively. Since the drawing is reduced,  the lengths of the left and the right side of $R$ are the same, and thus we can define the \emph{height} of the rectangle $R$ as the length of its left side. Similarly we can define the \emph{width} of $R$ as the length of its bottom side.   Since  $\pi$ preserves both the collection of basis arcs and the multicurve, this pattern appears precisely the same in $\pi(R)$, and $\pi$ can be regarded as an isometry, as it preserves the length of any arc.   We study the intersection  $R \cap \pi(R)$ through a sequence of claims, and ultimately conclude that the intersection is empty.

\begin{figure}
\begin{center}
\includegraphics[width=\textwidth]{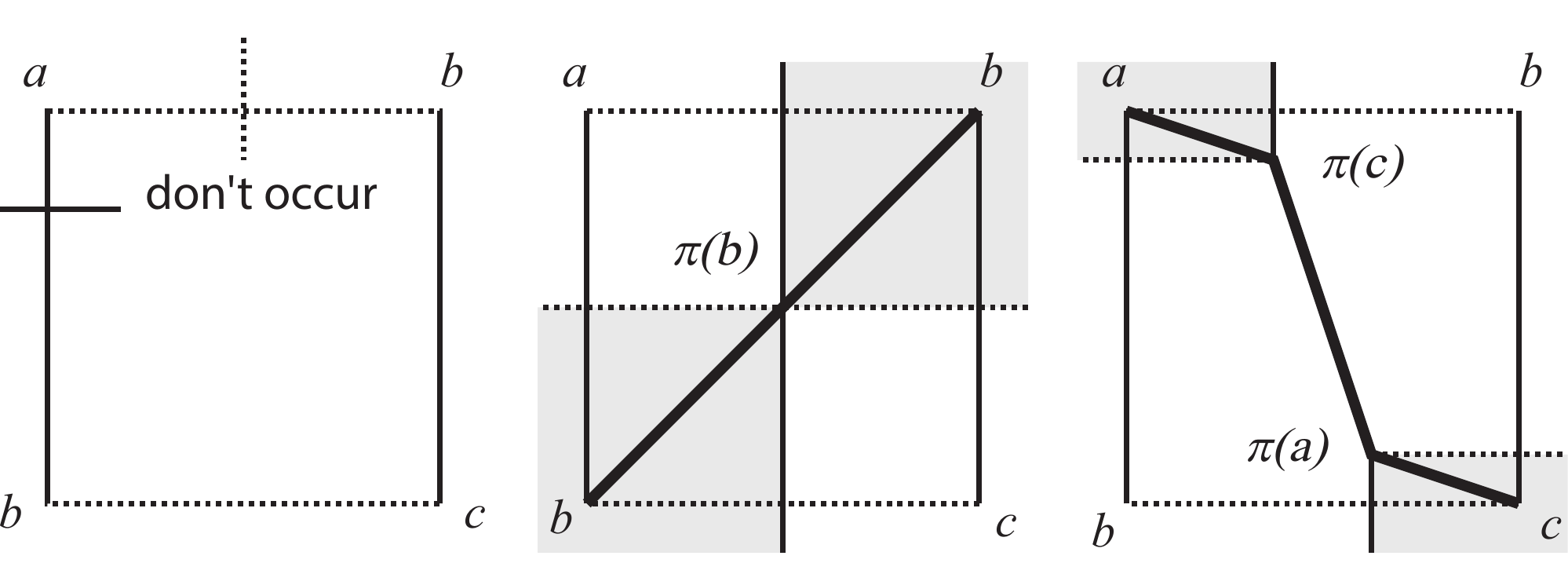}
 \caption{Intersections between $R$ and $\pi(R)$.  The vertical arcs are basis arcs ($\beta$), the horizontal arcs are portions of the multicurve ($\gamma$).  The shaded region indicates  $\pi(R)$.  }
 \label{fRectangleIntersections}
\end{center}
\end{figure}


\begin{claim_}
$\beta \cap \pi(\beta) = \emptyset$.  
\end{claim_}
Under the action of $\pi$, every basis arc of the track is moved to a disjoint basis arc.  The arc $\beta$ is a subarc of a basis arc so the claim follows.

\begin{claim_}
$\gamma \cap \pi(\gamma) = \emptyset$.    
\end{claim_}
Let $\gamma'$ be the component of the multicurve prescribed by $\overrightarrow{w}$ and containing $\gamma$. If $\gamma'$ is a closed curve, then $\gamma' \cap \pi(\gamma') = \emptyset$ by Lemma~\ref{lemInvariantCurve}. If $\gamma'$ is a simple arc, then $\gamma' \cap \pi(\gamma') = \emptyset$ by Corollary~\ref{cor_no_invariant_arc}. 

\begin{claim_}
$R \cap \pi(R)$ is the union of rectangles, each containing, in opposite corners,  a corner of $R$ and a corner of $\pi(R)$.   
\end{claim_}
Since $\pi$ preserves both the set of basis arcs and the multicurve, the overlap consists of a union of rectangles that are bounded by boundary arcs of $R$ and $\pi(R)$. In each component subrectangle, either the top is a subarc of $R$ and the bottom a subarc of $\pi(R)$, or vice-versa, because the alternative would imply that the height of $R$ is less than the height of $\pi(R)$, or vice-versa. Similarly, the left side is from $R$ and right side from $\pi(R)$, or vice-versa.  It follows that every subrectangle contains both a corner of $R$ and a corner of $\pi(R)$.

\begin{claim_}
$\pi(b) \notin R$.  
\end{claim_}
Suppose that $\pi(b) \in R$. By prior claims $\pi(\gamma)$ and $\pi(\beta)$ cut $R$ into quadrants and $R \cap \pi(R)$ includes either the two quadrants containing $a$ and $c$ or the two quadrants containing $b$. Since $\pi$ is an orientation-preserving map on the torus, the point $\pi(b)$ is both a bottom left and a top right vertex of a quadrant in $R \cap \pi(R)$, hence these two quadrants must contain the $b$ corners of $R$. See Figure~\ref{fRectangleIntersections}, middle picture. 
Now, $\pi$ must exchange these quadrants, for otherwise it would send one to itself and this would imply that $\pi$ has a fixed point in that quadrant, by the Brouwer's fixed-point theorem for a disk.  In one quadrant choose a diagonal joining $b$ to $\pi(b)$.  Then $\pi$ applied to that diagonal is a diagonal in the other quadrant joining $\pi(b)$ to $b$, and the union of the diagonal and its image is a closed connected curve that is carried by $T_1$ and is invariant under $\pi$. This contradicts Lemma~\ref{lemInvariantCurve}.

\begin{claim_}
$\pi(a) \notin R$ and $\pi(c) \notin R$.   
\end{claim_}
Suppose
$\pi(a) \in R$.   Then the $\pi(a)$ corner of $\pi(R)$ overlaps a corner
of $R$.   It cannot overlap a $b$ corner, for this would imply that
$\pi(b) \in R$, contradicting the previous claim.   Nor can it
overlap the $a$ corner.  For in this case we would see a rectangle of
intersection between the top-left corners of $R$ and $\pi(R)$.   But
such a rectangle of intersection would be invariant under $\pi$, and
would contain a fixed point.  So we consider what happens if the $\pi(a)$
corner of $\pi(R)$ overlaps the $c$ corner of $R$. Because the
intersection pattern is symmetric, we also see the $\pi(c)$ corner of
$\pi(R)$ overlapping the $a$ corner of $R$. In $R$ choose two connecting arcs, one from $a$ to $\pi(c)$ and the other from $\pi(c)$ to $\pi(a)$, see rightmost image in Figure~\ref{fRectangleIntersections}.  Then $\pi$ applied to this pair of connecting arcs is another pair of connecting arcs in $\pi(R)$, the first from $\pi(a)$ to $c$ and the other from $c$ to $a$ (not pictured).    The union of the four arcs is a closed connected curve that is carried
by $T_1$ and invariant under $\pi$, again in contradiction to Lemma~\ref{lemInvariantCurve}.   The same argument proves $\pi(c) \notin R$.

Since an overlap would imply that a corner of $\pi(R)$ is contained in $R$, and this does not occur, we conclude that the rectangles $R$ and $\pi(R)$ are disjoint. A slight neighborhood of
$R$ is a spiral annulus $A$ that is disjoint from the spiral annulus
$\pi(A)$.   This proves the proposition.
\end{proof}

\begin{theorem}
\label{tT1SpiralFree} The weighted track
 $\{T_1| p=P=F_n-1, q=Q=F_{n-1}-1, r = R = F_{n-2}-1,\text{terminals } \allowbreak \text{have weight } 1\}$ contains no $d$-spiral for $d\ge 1$.
\end{theorem}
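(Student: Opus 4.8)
The plan is to use the $\pi$-symmetry one more time, now to \emph{double} any spiral, and then to push the resulting $2$-spiral onto the Fibonacci track $T_0$, where Corollary~\ref{corNo2Spiral} forbids it. So suppose, for contradiction, that $\{T_1 \mid \overrightarrow{w}\}$ contains a $d$-spiral with $d \ge 1$ (for small $n$ there are too few intersections to host a spiral, so assume $n$ is large). Since $\{T_1 \mid \overrightarrow{w}\}$ is invariant under $\pi$, Proposition~\ref{pTwoSpirals} produces two disjoint spirals; by its proof, these are formed by the $\pi$-invariant multicurve $\gamma_n$ together with a chosen short basis arc $b$ and its image $\pi(b)$, sitting in disjoint spiral annuli $A$ and $\pi(A)$, each a $d$-spiral (since $\pi$ preserves intersection numbers, the $\pi$-image of a $d$-spiral is again a $d$-spiral).

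The key topological point is that $A$ and $\pi(A)$ have isotopic boundary curves in the torus. Indeed, $\pi$ is the rotation of the square model by the angle $\pi$, hence acts on $H_1$ of the torus as $-\mathrm{id}$; since the isotopy class of a simple closed curve on the torus is determined by its unoriented homology class, $\pi$ fixes the isotopy class of \emph{every} simple closed curve, in particular of $\partial A$. Therefore Lemma~\ref{lemConcentricSpirals} applies with $\beta := \gamma_n$ and $\gamma := b \cup \pi(b)$ — the arcs $b$ and $\pi(b)$ lie on distinct branches of $T_1$ (recall $\pi$ swaps lower- and upper-case branches) and so are disjoint, and each has reduced intersection with $\gamma_n$ since no component of a carried multicurve forms a bigon with a basis arc. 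This gives a $(d+d)=2d$-spiral between $\gamma_n$ and $b \cup \pi(b)$. Reading this back in the train-track language, some component $c$ of $\gamma_n$ forms a $2d$-spiral with a subarc of one of the two basis arcs; that is, $\{T_1 \mid \overrightarrow{w}\}$ contains a $2d$-spiral with $2d \ge 2$.

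To finish, I would transport this $2d$-spiral to $T_0$ through the closure map $\epsilon$. The component $c$ lies in a component $c'$ of the closed multicurve $\epsilon(\gamma_n)$, which is carried by $T_0$ with weights $p'=2F_n$, $q'=2F_{n-1}$, $r'=2F_{n-2}$; since $\gcd(2F_n,2F_{n-1})=2$, this multicurve is exactly two parallel copies of the curve of pattern $(F_n,F_{n-1})$, so $c'$ is itself carried by $T_0$ with weights $(F_n,F_{n-1},F_{n-2})$. As $c\subset c'$, and as the basis arc of $T_1$ involved in the $2d$-spiral is, under the embedding $T_1\hookrightarrow N(T_0)$, also a basis arc of $T_0$, the curve $c'$ forms a $2d$-spiral with a basis arc of $T_0$; equivalently (after an isotopy supported in $N(T_0)$ identifying $c'$ with the standard curve), $\{T_0\mid p=F_n,q=F_{n-1},r=F_{n-2}\}$ contains a $2d$-spiral with $2d\ge 2$, contradicting Corollary~\ref{corNo2Spiral}. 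I expect the main obstacle to be exactly this transfer step: verifying that the doubled spiral delivered by Lemma~\ref{lemConcentricSpirals} is genuinely a spiral ``of the track'' (a component of $\gamma_n$ against an honest basis arc), that a $T_1$-basis arc is a $T_0$-basis arc, and that isolating one of the two parallel components $c'$ of $\epsilon(\gamma_n)$ legitimately lands in the hypothesis of Corollary~\ref{corNo2Spiral}. An alternative route that avoids $T_0$ altogether is to iterate the doubling: Proposition~\ref{pTwoSpirals} and Lemma~\ref{lemConcentricSpirals} apply again to the $2d$-spiral to yield a $4d$-spiral, and inductively a $2^k d$-spiral for every $k$, which is impossible because $|c\cap b|$ is bounded as $c$ ranges over the finitely many components of $\gamma_n$ and $b$ over the finitely many chosen short basis arcs.
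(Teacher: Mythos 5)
Your proof follows the same overall strategy as the paper's --- Proposition~\ref{pTwoSpirals} to produce two disjoint spirals, Lemma~\ref{lemConcentricSpirals} to merge them, $\epsilon$ to move to $T_0$, and Corollary~\ref{corNo2Spiral} for the contradiction --- but you reverse the order of $\epsilon$ and Lemma~\ref{lemConcentricSpirals}: you combine the two spirals while still on the $T_1$ side and only then push to $T_0$, whereas the paper first applies $\epsilon$ and Lemma~\ref{lemIsotopic} to obtain two disjoint \emph{parallel} spirals carried by $T_0$, and applies Lemma~\ref{lemConcentricSpirals} there. Your observation that $\pi$ acts as $-\mathrm{id}$ on $H_1$ of the torus, hence fixes the isotopy class of every simple closed curve and in particular of $\partial A$, is a clean and self-contained way to verify the ``parallel boundaries'' hypothesis of Lemma~\ref{lemConcentricSpirals}; it is a nice alternative to the paper's appeal to Lemma~\ref{lemIsotopic}.

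The gap is exactly where you flag it. Lemma~\ref{lemConcentricSpirals} asks that $\beta$ and $\gamma$ be properly embedded multicurves, and its proof extends $\gamma_1$ along $\gamma$ ``until it exits $A$,'' relying on the fact that $\gamma$ has no endpoints in the interior of the large annulus $A$ (the union of $A_1$, $\pi(A_1)$, and the middle annulus). Taking $\gamma := b \cup \pi(b)$, a union of the deliberately \emph{short} basis arcs fixed in the setup of Proposition~\ref{pTwoSpirals}, violates this: these arcs have free endpoints in a closed surface and, being short, need not span the middle annulus, so an endpoint of $b$ may lie in the interior of $A$ and $\gamma_1'$ need not reach $\partial A$. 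Consequently the assertion that ``some component $c$ of $\gamma_n$ forms a $2d$-spiral with a subarc of one of the two basis arcs'' does not follow from the lemma as invoked; you would first have to replace $b$ by a long enough --- or closed --- basis curve for $T_1$ that remains disjoint from its $\pi$-image, transverse to the track, and meets it once, and this needs its own argument. The paper avoids the problem by passing to $T_0$ first, where the closed meridian $\mu$ is available as a basis curve and there are no endpoints to worry about. Your proposed ``alternative route that avoids $T_0$'' inherits the same unresolved point: to iterate you need the doubled spiral to again be a spiral of the weighted track, i.e.\ formed against an honest basis curve, which is precisely the thing left unestablished.
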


\begin{proof}
If there is a $d$-spiral with $d \ge 1$, then there is a $1$-spiral, and Proposition~\ref{pTwoSpirals}
implies that we can actually find two disjoint 1-spirals.
By Lemma~\ref{lemIsotopic}, $\epsilon$ maps these to a pair of disjoint parallel spirals in $\{T_0 | p'=2F_n,q=2F_{n-1},r=2F_{n-2}\}$.   

But then, Lemma~\ref{lemConcentricSpirals} implies that $\beta$, a basis arc, and $\alpha$, a component of the multicurve prescribed by the weights, form a $2$-spiral in $\{T_0 | p'=2F_n,q'=2F_{n-1},r'=2F_{n-2}\}$.  (Note that Lemma~\ref{lemConcentricSpirals} does not require that $\alpha$ and $\alpha'$ are subarcs of the same multicurve, only that the two spirals are disjoint, which forces $\alpha$ through the $\alpha'$ spiral and vice-versa.)

Now the multicurve $\{T_0 | p'=2F_n,q'=2F_{n-1},r'=2F_{n-2}\}$ has two components, each with equal weights.  Hence $\alpha$ is described by the weighted track $\{T_0 | p'=F_n,q'=F_{n-1},r'=F_{n-2}\}$.    But this is in direct contradiction to Corollary~\ref{corNo2Spiral} which states, in particular, that this weighted track contains no 2-spiral.  
\end{proof}

\section{Avoiding Spirals in the Plane}
\label{sec:noSpiralsPlane}

In Section~\ref{sec:SFCP} we construct a spiral-free drawing of two (reduced) curves in the plane with an arbitrarily large number of intersection points. This allows us to give, in Section~\ref{sec:counter}, a counterexample to a proof by Pach and T\'{o}th~\cite{PT01} that the number of intersections in a minimal realization of a string graph is at most exponential.

\subsection{Spiral-Free Curves in the Plane}
\label{sec:SFCP}

We are now ready to prove our ultimate goal, that the planar
weighted track $\{T _2|  p=F_n-1, q
=F_{n-1}-1,r=F_{n-2}-1,\text{terminals have weight } 1\}$, pictured in Figure~\ref{fT2}, is
spiral-free.
Our proof exploits the fact that the weighted track $T_1$ is a \emph{double covering} of the weighted $T_2$ track.  If the weighted $T_2$ track contains a spiral, we will demonstrate that it can be either be \emph{lifted} (pulled back) to $T_1$, which contradicts Proposition \ref{pTwoSpirals}, or that the weighted track $T_1$ contains a fixed curve, which contradicts Lemma \ref{lemInvariantCurve}.

\begin{figure}
\begin{center}
\includegraphics{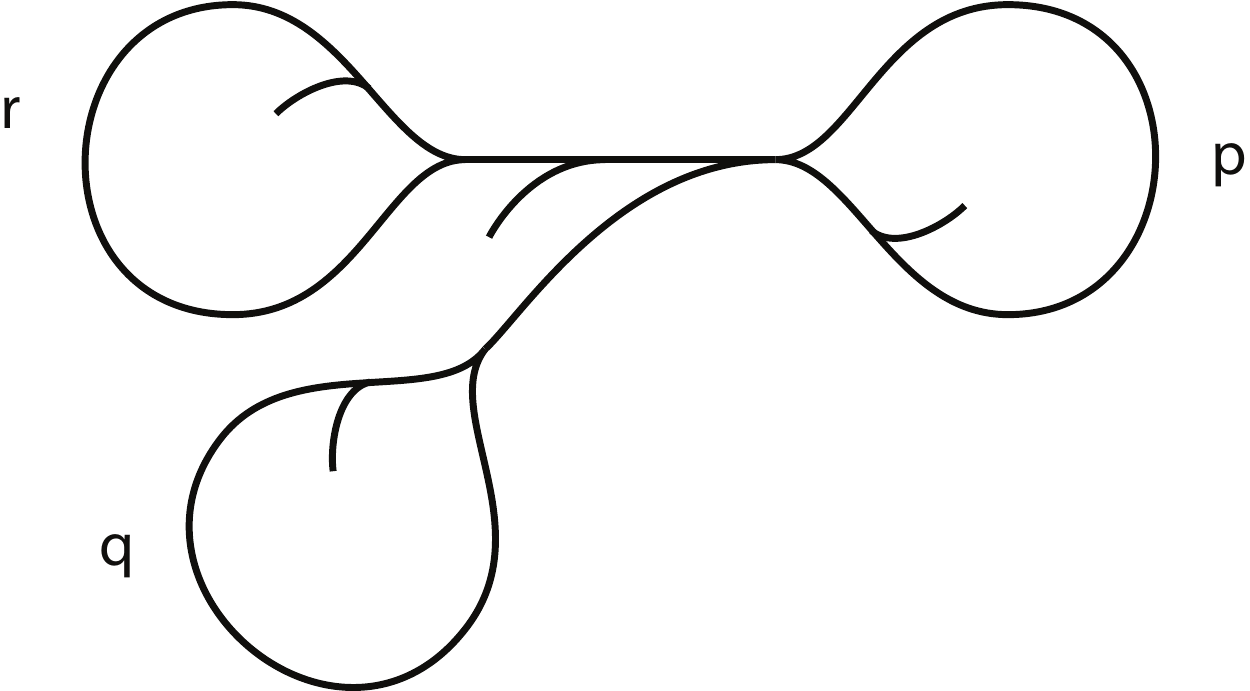}
\caption{The track $T_2$. The weighted track $\{T_2|
p=F_n-1, q =F_{n-1}-1,r=F_{n-2}-1, \text{terminals have weight } 1\}$
has no spirals.}
\label{fT2}
\end{center}

\end{figure}

There is a strong similarity between the (weighted) tracks $T_1$ and
$T_2$, a similarity induced by  $\pi$. We can view $T_1$ as
formed by cutting each of the loops of $T_2$, taking two copies, and
then gluing them together. Or, alternatively we can see that $T_2$ is a quotient of $T_1$ under the action of $\pi$.  Let $N(T_1)$ be a regular neighborhood of $T_1$ that is invariant under the action of $\pi$, but for which no point is fixed by $\pi$.  Let $N(T_2)$ be the \emph{quotient space} $N(T_1)/(x \sim \pi(x))$, where each each point has been identified with
its image under $\pi$. Then, there is a natural projection $\rho: N(T_1) \rightarrow N(T_2)$ sending each point in $N(T_1)$ to its corresponding identified point:  $\rho(x) = \rho(\pi(x))$. In fact, $\rho$ is a \emph{local homeomorphism}, and restricts to a
homeomorphism of any neighborhood $U \subset N(T_1)$ that is disjoint
from $\pi(U) \subset N(T_1)$. Together, the space $N(T_1)$ and the
map $\rho$ are referred to as a \emph{covering space} of $N(T_2)$; see~\cite[Chapter 1.3]{Hatcher}. Covering spaces have very strong properties. Here we
will use only the fact that any simply-connected region $U \subset
N(T_2)$ \emph{lifts to $N(T_1)$}, that is, the inverse image $\rho^{-1}(U)$
consists of $n$ disjoint regions $\{U_1, U_2, \dots,U_n\} \subset
N(T_1)$, each homeomorphic to $U$. In our case, $N(T_1)$ is a double-cover
of $N(T_2)$, that is, $n=2$. Points, arcs, disks and embedded rectangles
are all simply connected, so each will lift to two disjoint
copies in $N(T_1)$.

Because the lower case and upper case weights are equal, $\rho$ can
also be regarded as a map between the weighted tracks $\rho:\{T_1|
p=P=F_n-1, q=Q=F_{n-1}-1, r = R = F_{n-2}-1,\text{terminals have weight } 1\} \rightarrow \{T_2| p=F_n-1, q
=F_{n-1}-1,r=F_{n-2}-1,\text{terminals have weight } 1\}$ that converts to lower case and projects induced curves onto induced curves. In fact, though we will not prove
it here, the multicurve specified by these weights in $N(T_2)$ is a union of two properly
embedded arcs (contains no closed curves), and therefore lifts to the union of precisely four properly embedded arcs in $N(T_1)$.

There is one essential difference between $T_1$ and $T_2$: $T_2$ is
planar whereas $T_1$ can only embed in surfaces of genus at least $1$.

\begin{theorem}
\label{tT2SpiralFree}
 The weighted train track $\{T_2 | p=F_n-1, q =
F_{n-1}-1,r=F_{n-2}-1,\text{terminals have weight } 1\}$ contains no spiral.
\end{theorem}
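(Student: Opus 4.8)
The plan is to argue by contradiction using the covering space structure $\rho\colon N(T_1)\to N(T_2)$ set up above. Suppose $\{T_2 \mid \overrightarrow w\}$ contains a $d$-spiral for some $d\ge 1$; then it contains a $1$-spiral, given by a subarc $\gamma$ of the induced multicurve together with a subarc $\beta$ of a basis arc for some branch of $T_2$, bounding a singular rectangle $R\subset N(T_2)$ that meets itself in exactly one point and whose interior is an embedded open disk. I would choose the basis arcs for $T_2$ to be exactly the images under $\rho$ of the chosen $\pi$-invariant family of disjoint basis arcs for $T_1$, so that $\rho^{-1}$ of a basis arc of $T_2$ is a pair of disjoint basis arcs of $T_1$; similarly $\rho^{-1}$ of the induced multicurve on $T_2$ is the induced multicurve on $T_1$ (here one uses that the lower- and upper-case weights agree).

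The heart of the argument is to lift $R$. Since the interior of $R$ is simply connected (an open disk), and $R$ itself is an embedded closed disk, $\rho^{-1}(R)$ is a pair of disjoint closed disks $R_1, R_2 = \pi(R_1)$ in $N(T_1)$, each mapped homeomorphically onto $R$ by $\rho$. On each $R_i$, the preimages of $\gamma$ and $\beta$ give a subarc $\gamma_i$ of the $T_1$-multicurve and a subarc $\beta_i$ of a $T_1$-basis arc forming the two non-$b$ sides — so each $R_i$ is again a singular rectangle realizing a $1$-spiral \emph{provided} its self-contact point survives the lift. The subtlety is the self-touching point $b$ of $R$: a priori the two preimages $R_1, R_2$ need not individually meet themselves; it could instead happen that $R_1$ touches $R_2$ at a point over $b$ while each $R_i$ is embedded. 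This is the main obstacle, and it is exactly where $T_2$ being planar (and $T_1$ not) must be used, or rather where we invoke the earlier lemmas. I would handle it as follows: the two preimages of the point $b$ are $b_1\in R_1$ and $b_2=\pi(b_1)\in R_2$. If $b_1$ is a self-contact point of $R_1$ (i.e.\ $R_1$ meets itself only at $b_1$), then $R_1$ is a singular rectangle realizing a $1$-spiral in $N(T_1)$, and since $\pi(R_1)=R_2$ is disjoint from $R_1$ (disjointness of the two sheets over the embedded disk $R$), the weighted track $\{T_1\mid\overrightarrow w\}$ contains two disjoint $1$-spirals — but that is consistent with Proposition~\ref{pTwoSpirals}, so I instead push further: by Lemma~\ref{lemConcentricSpirals}-style reasoning applied exactly as in the proof of Theorem~\ref{tT1SpiralFree}, two disjoint parallel spirals in $T_1$ combine to a $2$-spiral with the $\mu$-direction, contradicting Corollary~\ref{corNo2Spiral} via $\epsilon$. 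Alternatively — the case that actually causes trouble — if $b_1$ is \emph{not} a self-contact of $R_1$, then the unique self-crossing of $\rho|_{R_1\cup R_2}$ lies \emph{between} $R_1$ and $R_2$: $R_1$ and $R_2$ share the single point $b_1=\dots$ Wait, they are disjoint; so in fact the singular structure must be inside one $R_i$. Let me state it cleanly: $\rho$ is a local homeomorphism and $R$ is an embedded disk in $N(T_2)$, so $\rho^{-1}(R)=R_1\sqcup R_2$ with each $\rho|_{R_i}$ a homeomorphism onto $R$; therefore the self-identification of $R$'s boundary at $b$ pulls back, within $R_i$, to an identification of two boundary points of $R_i$ — i.e.\ each $R_i$ individually is a singular rectangle, and the bad case simply does not arise.

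So the clean write-up is: lift $R$ to $R_1\sqcup R_2$; observe each $R_i$ realizes a $1$-spiral for $\{T_1\mid\overrightarrow w\}$ and $R_2=\pi(R_1)$ is disjoint from $R_1$; hence $\{T_1\mid\overrightarrow w\}$ contains two disjoint $1$-spirals. Then apply exactly the argument from the proof of Theorem~\ref{tT1SpiralFree}: under $\epsilon$ the two disjoint $1$-spirals in $T_1$ map to two disjoint parallel spirals in $\{T_0\mid p'=2F_n, q'=2F_{n-1}, r'=2F_{n-2}\}$ with isotopic boundary curves, Lemma~\ref{lemConcentricSpirals} upgrades them to a $2$-spiral of a component $\alpha$ of the $T_0$-multicurve with a basis arc, and since that multicurve has two equal-weight components $\alpha$ is carried by $\{T_0\mid p=F_n,q=F_{n-1},r=F_{n-2}\}$, contradicting Corollary~\ref{corNo2Spiral}. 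The one place needing care, and which I'd double-check rather than wave at, is that the two lifted spirals are genuinely disjoint and have the right concentric structure to invoke Lemma~\ref{lemConcentricSpirals} — but this is immediate since $\rho^{-1}(R)$ is a disjoint union and $\pi$ carries one component to the other while preserving the track, neighborhood, basis arcs, and multicurve. Hence no such $d$-spiral exists, and the weighted planar track $\{T_2\mid p=F_n-1,q=F_{n-1}-1,r=F_{n-2}-1,\text{terminals have weight }1\}$ is spiral-free.
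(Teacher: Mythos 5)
There is a genuine gap, and it is precisely at the point you flagged and then talked yourself out of. The singular rectangle $R$ of a spiral is \emph{not} an embedded closed disk: its boundary path meets itself at the point $b$, so the closed region $R$ (with $b$ included) is a disk with two boundary points identified. Such a region is homotopy equivalent to a circle, hence not simply connected, and the covering-space lifting property does not apply to it. Your ``clean'' restatement --- ``$\rho$ is a local homeomorphism and $R$ is an embedded disk, so $\rho^{-1}(R)=R_1\sqcup R_2$ with each $\rho|_{R_i}$ a homeomorphism onto $R$, therefore each $R_i$ individually is a singular rectangle and the bad case simply does not arise'' --- is exactly the step that fails. What one can lift is $R\setminus\{b\}$ (a rectangle with two open corners), which does split into two disjoint copies $R_1, R_2$; but the point $b$ has two preimages $b_1, b_2$, and it is entirely possible a priori that each $R_i$ has one open corner limiting to $b_1$ and the other limiting to $b_2$. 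In that ``chained'' case neither $R_i$ is itself a singular rectangle, so neither yields a spiral in $T_1$, and your argument produces no contradiction. This corresponds to the small loop in $R$ around $b$ mapping to the nontrivial deck transformation of the double cover, which is not ruled out by anything you have said.

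The paper's proof handles exactly this dichotomy. In the unchained case a single lifted spiral already contradicts Theorem~\ref{tT1SpiralFree} (you do not need to rerun the two-disjoint-spirals/Lemma~\ref{lemConcentricSpirals} machinery --- that work is already packaged inside Theorem~\ref{tT1SpiralFree}). In the chained case one takes a diagonal $\beta_1$ of $R_1$ joining $b_1$ to $b_2$ and its image $\beta_2=\pi(\beta_1)$, a diagonal of $R_2$; their union is a closed connected curve carried by $T_1$ and invariant under $\pi$, contradicting Lemma~\ref{lemInvariantCurve}. That second branch is the essential content of the theorem (it is where the planarity of $T_2$ versus the non-planarity of $T_1$ actually bites), and your write-up omits it.
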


\begin{proof} By way of contradiction, assume there is a spiral, thus there
is a rectangle with a single point of intersection $b$ formed by the induced multicurve and a basis arc in the neighborhood of $T_2$.    We cannot
lift this rectangle since the point of self-contact prevents it from
being simply connected.   In an abuse of notation we will let $R$
denote the rectangle with only the point $b$ removed, that is, a
rectangle with two open corners.   This $R$ is simply connected and
therefore lifts to two ``rectangles'', $R_1$ and $R_2$ in the double cover, a neighborhood of the weighted track $\{T_1|
p=P=F_n-1, q=Q=F_{n-1}-1, r = R = F_{n-2}-1,\text{terminals have weight } 1\}$.    There are also two
lifts of $b$: $b_1$ and $b_2$.   There are two possibilities: 1) each
of the rectangles $R_1$ and $R_2$ uses only one of the points $b_1$
and $b_2$, or 2) $R_1$ and $R_2$ are chained together, each using
both $b_1$ and $b_2$.  See Figure~\ref{fOwnShare}. In the former
case, $R_1$ and $R_2$ are both spirals, and either one can be used
to contradict Theorem~\ref{tT1SpiralFree}.

\begin{figure}
\begin{center}
\includegraphics{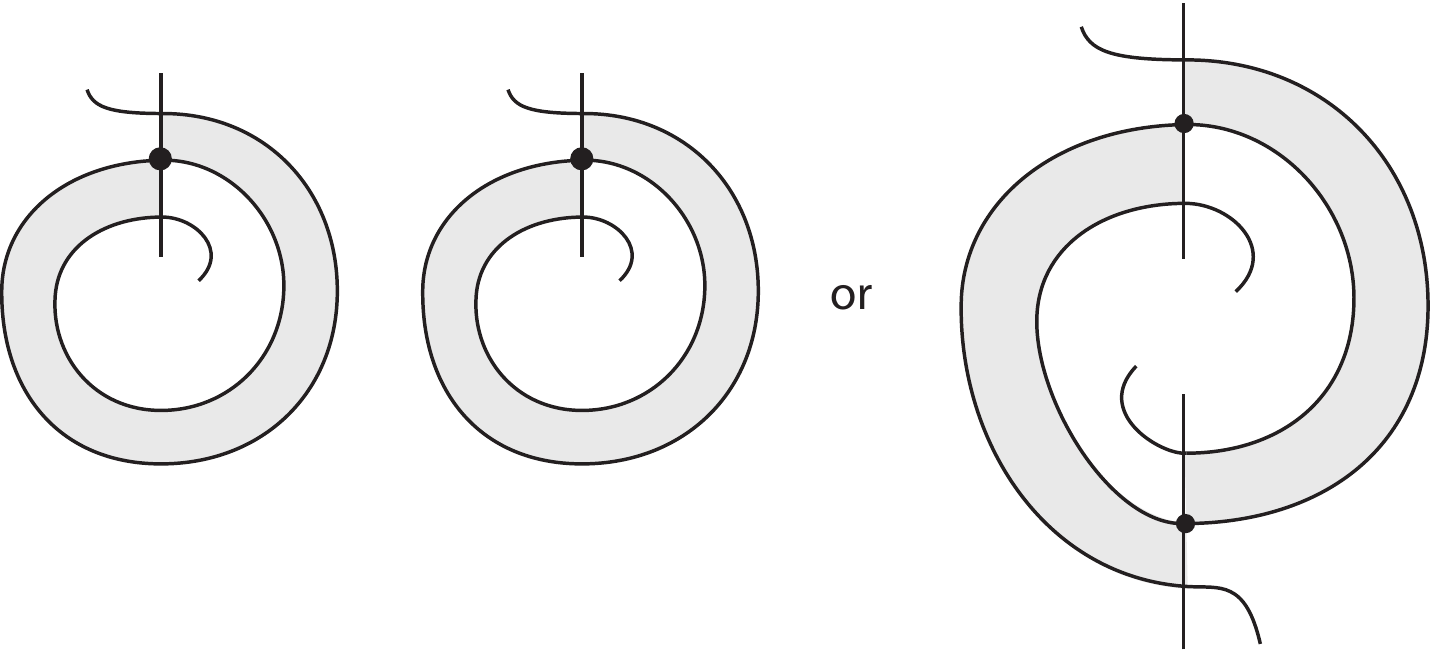}
\caption{The rectangles in $N(T_1)$ can either each have their own $b$
or they share the two $b$'s.} 
\label{fOwnShare}
\end{center}
\end{figure}

The latter case can also be seen to be a contradiction.    While the
involution $\pi$ exchanges $R_1$ and $R_2$, it preserves the union $R_1 \cup
R_2 \cup b_1 \cup b_2$.   Let $\beta_1$ be the diagonal arc of $R_1$ connecting $b_1$ and $b_2$, and $\beta_2 = \pi(\beta_1)$ the diagonal of $R_2$.   But then $\pi$ preserves the closed curve $\beta = \beta_1 \cup \beta_2$, contradicting Lemma~\ref{lemInvariantCurve}.
\end{proof}

\subsection{A Counterexample to Pach and T\'{o}th}\label{sec:counter}

Pach and T\'oth~\cite{PT01} proved that for every $n\ge 1$, every pair of curves in the plane with reduced intersection and with sufficiently many intersections, forms either a spiral of depth $n$ or a fold of width $n$. Their paper claims to show that in a minimal realization of a string graph both the depth of a spiral and the width of a fold are bounded above in the number of curves. An upper bound on the number of intersections between each pair of curves, hence of the entire string graph realization would follow. Their argument against spirals is correct: sufficiently deep spirals can be used to reduce the number of intersections in a realization, contradicting the minimality of the realization.

However, their argument that the existence of a wide fold implies the existence of a deep spiral is unfortunately incorrect and cannot be fixed easily.

The main idea in the proof is to start with the original wide fold, and assuming there are no deep spirals, construct a sequence of ``iterated'' folds of smaller widths, and in each iteration find an obstruction that prevents easy simplification of the drawing. The obstruction in each iteration is a pair $(g,v)$ where $g$ is an edge ``cutting all the way through'' the fold and terminating in a vertex $v$ inside the fold; see Figure~\ref{cutting_all_the_way}.
It is then argued that for each iteration a new obstruction is needed which would imply that after at most $2m+1$ iterations the drawing can be simplified. This claim, however, is false. We construct a counterexample containing a pair of curves forming an arbitrarily wide fold and having no spirals or bigons, with only four possible pairs $(g,v)$ that can serve as obstructions.

\begin{figure}
\begin{center}
\includegraphics{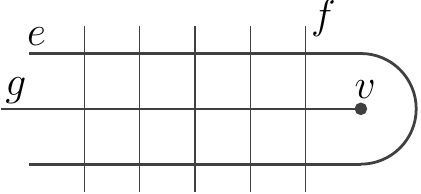}
\end{center}
\caption{A curve $g$ cutting all the way through a fold formed by $e$ and $f$.  In \cite{PT01}, this is referred to as an empty path of type 2.}
\label{cutting_all_the_way}
\end{figure}

The counterexample consists of two edges $e_1, e_2$ that are carried by the train track $T_2$ in Figure~\ref{fT2}, and an edge $f$ that crosses the train track transversely in the central part. We also add four auxiliary edges joined to the endpoints of $e_1$ and $e_2$. See Figure~\ref{counter_curves} for the case $n=6$.

\begin{figure}
\begin{center}
\includegraphics{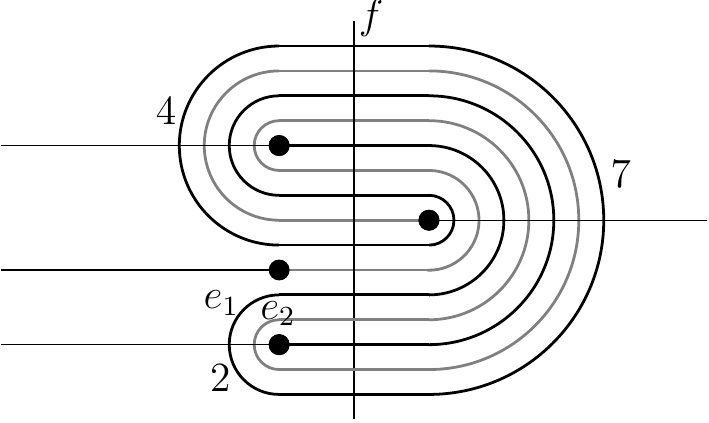}
\end{center}
\caption{A system of curves carried by the train track $T_2$ on Fig.~\ref{fT2} for $n=6$, extended to a drawing of a graph with $7$ edges.}
\label{counter_curves}
\end{figure}

By Theorem~\ref{tT2SpiralFree} the curves $e_1$ and $f$ do not form a spiral. Therefore, they form an arbitrarily wide fold if $n$ is arbitrarily large (see~\cite{PT01}, also~\cite{SSS11}). However, only the pairs $(v,e_i)$ where $v$ is an endpoint of $e_i$ are available as obstructions to simplifying the folds, so the number of obstructions is fixed and does not increase with the number of intersections.

\smallskip

This gap in Pach and T\'{o}th's argument cannot be filled easily. In their argument, folds are simplified only by local redrawings of the pairs of curves $g,h$ that form an empty bigon, or a bigon which may contain some vertices but all edges that cross the part of the boundary formed by $g$ also cross the opposite part of the boundary formed by $h$. In our example, no such bigons exist, and there are no spirals, so none of their redrawing techniques apply. 

It {\em is} possible to simplify the drawing in our example by a simultaneous local redrawing of both edges $e_1$ and $e_2$. However, it is not clear how to generalize this operation for systems of more than two curves. The related \emph{Simple shortcut problem}, introduced by {\v{S}}tefankovi{\v{c}}~\cite[Section 4.2]{S05}, asks about the possibility of simplifying a drawing of a graph in the neighborhood of one edge, and it is still wide open.

\bibliographystyle{vlastni}
\bibliography{noSpiralsPlane_arxiv}

\end{document}